\newtheorem{theorem}{Theorem}
\newtheorem{proposition}[theorem]{Proposition}
\newtheorem{lemma}[theorem]{Lemma}
\newtheorem{corollary}[theorem]{Corollary}
\newtheorem{remark}[theorem]{Remark}
\newcommand{\CP}{\mathbb{CP}}
\newcommand{\CC}{\mathbb{C}}
\newcommand{\RR}{\mathbb{R}}
\newcommand{\ZZ}{\mathbb{Z}}
\newcommand{\RP}{\mathbb{RP}}
\newcommand{\U}{{\rm{U}}}
\numberwithin{equation}{section}
\numberwithin{theorem}{section}
\numberwithin{table}{section}
\numberwithin{table}{section}
\begin{document}
\bibliographystyle{amsalpha} 
\title[Quotient singularities]{Quotient singularities, eta~invariants, and self-dual metrics}
\author{Michael T. Lock}
\thanks{The first author was partially supported by NSF Grant DMS-1148490}
\address{Department of Mathematics, University of Texas, Austin, 
TX, 78712}
\email{mlock@math.utexas.edu}
\author{Jeff A. Viaclovsky}
\address{Department of Mathematics, University of Wisconsin, Madison, 
WI, 53706}
\email{jeffv@math.wisc.edu}
\thanks{The second author was partially supported by NSF Grant DMS-1405725}
\date{January 13, 2015}
\begin{abstract}
There are three main components to this article:
\vspace{2mm}
\begin{itemize}

\item (i) A formula for the eta invariant of the 
signature complex for any finite subgroup of
${\rm{SO}}(4)$ acting freely on $S^3$ is given. An application of
this is a non-existence result for Ricci-flat ALE metrics on certain 
spaces. 

\vspace{1mm}
\item (ii) A formula for the orbifold correction term 
that arises in the index of the self-dual deformation complex is proved for all finite subgroups of
${\rm{SO}}(4)$ which act freely on $S^3$. Some applications of this
formula to the realm of self-dual and scalar-flat K\"ahler metrics are also discussed. 

\vspace{1mm}
\item  (iii) Two infinite families of scalar-flat anti-self-dual ALE spaces with groups at infinity not contained in ${\rm U}(2)$ are constructed.  Using these spaces,
new examples of self-dual metrics on $n \# \mathbb{CP}^2$ are
obtained for $n \geq 3$.  
\end{itemize}
\end{abstract}
\maketitle
\section{Introduction}
\label{intro}
This focus of this work is on questions arising from the study of four-dimensional spaces that have isolated singularities or non-compact ends which are respectively modeled on neighborhoods of the origin and infinity of $\RR^4/\Gamma$ where $\Gamma\subset{\rm SO}(4)$ is a finite subgroup which act freely on $S^3$.

In particular, we say that $(M^4,g)$ is a {\textit{Riemannian orbifold with isolated singularities}} if $g$ is a smooth Riemannian metric away from a finite set of singular points, and at each singular point the metric is locally the quotient of a smooth $\Gamma$-invariant metric on $B^4$ by some finite subgroup $\Gamma\subset{\rm SO}(4)$ which acts freely on $S^3$.  The group $\Gamma$ is known as the {\textit{orbifold group}} at that point.  

Now, given such a compact orbifold with positive scalar curvature, the Green's function for the conformal Laplacian associated to any point $p\in M$ is guaranteed to exist, which we denote by $G_p$.  Then, the non-compact space $(M^4\setminus\{p\},G_p^2g)$
is a complete scalar-flat orbifold with a coordinate system at infinity arising from inverted normal coordinates in the metric $g$ around $p$.  This, which we call a {\textit{conformal blow-up}},
motivates the following definition.  

We say that a non-compact Riemannian orbifold $(X^4,g)$ is {\textit{asymptotically locally Euclidean (ALE)}} of order $\tau$ if there exists a diffeomorphism
\begin{align}
\psi:X^4\setminus U\rightarrow \big(\RR^4\setminus \overline{B_R(0)}\big)/\Gamma,
\end{align}
where $U\subset X^4$ is compact and $\Gamma\subset {\rm SO}(4)$ is a finite subgroup which acts freely on $S^3$, satisfying $(\psi_*g)_{ij}=\delta_{ij}+\mathcal{O}(r^{-\tau})$ and $\partial^{|k|}(\psi_*g)_{ij}=\mathcal{O}(r^{-\tau-k})$, for any partial derivative of order $k$, as $r\rightarrow \infty$ where $r$ is the distance to some fixed basepoint.  The group $\Gamma$ is known as the {\textit{group at infinity}}.

Now, for such an ALE space, let $u : X^4 \rightarrow \RR^+$ be a function which satisfies $u = O(r^{-2})$ as $r \rightarrow \infty$. 
Then, there is a compactification of the space $(X^4, u^2 g)$ to an orbifold, which we denote by $(\widehat{X^4}, \widehat{g})$.  In general, this will only be a 
$C^{1,\alpha}$-orbifold.  However if the metric satisfies a condition known as anti-self-duality, which we discuss next, there exists a compactification to a $C^{\infty}$-orbifold with positive Yamabe invariant, see \cite{TV2, ChenLeBrunWeber}. 

On an oriented four-dimensional Riemannian manifold $(M,g)$, the Hodge star operator associated to the metric $g$ acting on $2$-forms satisfies $*^2=Id$ and, in turn, induces the decomposition $\Lambda^2=\Lambda^2_+\oplus\Lambda^2_-$, where $\Lambda^2_{\pm}$ are the $\pm 1$ eigenspaces of $*|_{\Lambda^2}$ respectively.  Viewing the Weyl tensor as an operator
$\mathcal{W}_g:\Lambda^2\rightarrow\Lambda^2$, this leads to the decomposition
\begin{align}
\label{Weyl_decomposition}
 \mathcal{W}_g=\mathcal{W}^+_g+\mathcal{W}^-_g,
 \end{align}
 where $\mathcal{W}^{\pm}_g=\Pi^{\pm}\circ\mathcal{W}_g\circ\Pi^{\pm}$, with $\Pi^{\pm}=(Id\pm *)/2$ being the respective projection maps onto $\Lambda^2_{\pm}$.
This decomposition is conformally invariant.  The metric $g$ is called {\textit{self-dual}} if $\mathcal{W}^-_g=0$ and {\textit{anti-self-dual}} if $\mathcal{W}^+_g=0$.  It is important to note that by reversing orientation a self-dual metric becomes anti-self-dual and vice versa.

\begin{remark}
\label{compactification_groups}
{\em 
The conformal compactification of an anti-self-dual ALE space, with group at infinity $\Gamma$, has the same orbifold group at the point of compactification as long as the orientation is reversed, in which case the metric is self-dual.
Therefore, while our focus is on anti-self-dual ALE metrics, 
we will consider the self-dual orientation for compact orbifolds. 
}
\end{remark} 

It is necessary to briefly introduce
the classification of finite subgroups of ${\rm SO}(4)$ which act freely on $S^3$ before stating our main results.  A more thorough discussion is provided in Section \ref{SO(4)_groups}.  These groups are given by the finite subgroups of ${\rm U}(2)$ which act feely on $S^3$, and their orientation reversed conjugates by which we mean that there is an orientation-reversing intertwining map between said groups.  Given a group $\Gamma\subset{\rm SO}(4)$, its orientation reversed conjugate will be denoted by $\overline{\Gamma}\subset{\rm SO}(4)$.  In Table \ref{groups} below, we list of all finite subgroups of \U(2) which act freely on $S^3$.  From this all desired subgroups of ${\rm SO}(4)$ can be understood.  First, a remark on notation:
\begin{itemize}
\item 
For $m$ and $n$ nonzero integers, $L(m,n)$ denotes the cyclic group generated by
\begin{align*}
\left(
\begin{matrix}
\exp(2\pi i/n)&0\\
0&\exp(2\pi i m /n)
\end{matrix}
\right).
\end{align*}

\item 
The map $\phi:S^3\times S^3\rightarrow {\rm SO}(4)$ is the standard double cover, see
\eqref{phi}.

\item 
The binary polyhedral groups are denoted by $D^*_{4n}$, $T^*$, $O^*$, $I^*$ (dihedral, tetrahedral, octahedral, icosahedral respectively).
\end{itemize}

\begin{table}[ht]
\caption{Finite subgroups of \U(2) which act freely on $S^3$}
\label{groups}
\begin{tabular}{ll l}
\hline
$\Gamma\subset{\rm U}(2)$ & Conditions & Order\\
\hline\hline
$L(m,n)$   & $(m,n) = 1$ & $n$\\
$ \phi(L(1,2m) \times D^*_{4n}) $ &  $(m,2n) = 1$ & $4mn$\\
$ \phi(L(1,2m) \times T^*) $  & $(m,6) = 1$ & $24m$\\
$ \phi(L(1,2m) \times O^*) $ & $(m,6) = 1$ & $48m$\\
$ \phi(L(1,2m)\times I^*) $ & $(m,30) = 1$ & $120m$ \\
 Index--$2$ diagonal $\subset   \phi(L(1,4m)\times D^*_{4n})$&  $(m,2) = 2,(m,n)=1$& $4mn$\\
 Index--$3$ diagonal $\subset  \phi(L(1,6m) \times T^*)$  & $(m,6)=3$ & $24m$.\\
\hline
\end{tabular}
\end{table}

\begin{remark}
{\em
The index--$2$ and index--$3$ diagonal subgroups will be denoted by $\mathfrak{I}^2_{m,n}$ and $\mathfrak{I}^3_m$ respectively.  Also, often only non-cyclic subgroups of ${\rm U}(2)$ will be considered.  This excludes the cyclic groups $\phi(L(1,2m) \times D^*_{4})$ and $\mathfrak{I}^2_{m,1}$ (these are the $n=1$ cases).}
\end{remark}

We are now able to state the main results of this work.  Although there is a relationship between the underlying ideas of their proofs, these fall into three distinct categories and are separated accordingly.

\subsection{Eta invariants and Einstein metrics}
\label{Einstein_eta}

Let $(M,g)$ be a compact orbifold having a finite number of isolated singularities $\{p_1,\cdots,p_k\}$ with
corresponding orbifold groups $\{\Gamma_1,\cdots, \Gamma_k\}\subset {\rm SO}(4)$. 
The orbifold signature theorem gives the formula
\begin{align}
\label{tau}
\tau_{top}(M)=\tau_{orb}(M)-\sum_{i=1}^n\eta(S^3/\Gamma_i),
\end{align}
where the quantity $\tau_{orb}(M)$ is the orbifold signature defined by
\begin{align}
\tau_{orb}(M)=\frac{1}{12\pi^2}\int_{M}(||W^+||^2-||W^-||^2)dV_{g},
\end{align}
and $\eta(S^3/\Gamma_i)$ is the $\eta$-invariant of the signature complex.  Since $\Gamma_i\subset {\rm SO}(4)$ is a finite subgroup, this can be shown to be given by
\begin{align}
\label{eta-invariant}
\eta( S^3/\Gamma_i) = \frac{1}{|\Gamma_i|} \sum_{\gamma \neq Id\in \Gamma_i}  \cot \Big(\frac{ r(\gamma)}{2}\Big) 
\cot\Big(\frac{s(\gamma)}{2}\Big),
\end{align}
where $r(\gamma)$ and $s(\gamma)$ denote the rotation numbers of $\gamma\in\Gamma_i$.

The $\eta$-invariants of certain groups are known.  For finite subgroups of ${\rm SU}(2)$ it can be computed directly \cite{nakajima,Hitchin}.  Also, for relatively prime integers $1\leq q<p$, Ashikaga-Ishikawa \cite{AshiIshi} proved a formula for the $\eta$-invariant of the cyclic group $L(q,p)$ in terms of the $e_i$ and $k$ that arise in the modified Euclidean algorithm \eqref{modified_EA}.
(This is also recovered easily from the authors' work in \cite{LockViaclovsky}.)  This formula is given in \eqref{etacyclic}.  Our first results is a formula for all possible cases,  and is proved in Section~\ref{proof_eta_invariant_thm}.
\begin{theorem}
\label{eta_theorem}
Let $\Gamma\subset{\rm SO}(4)$ be a non-cyclic finite subgroup which acts freely on~$S^3$. 
Then, the eta-invariant of the signature complex is given as follows:
\begin{itemize}
\item For $\Gamma\subset{\rm U}(2)$
\begin{align}
\label{eta(U(2))}
\eta(S^3/\Gamma)=\frac{2}{3}\Big( \frac{2m^2+1}{|\Gamma|}\Big)-1+\mathcal{A}_{\Gamma},
\end{align}
where $\mathcal{A}_{\Gamma}$ is a constant given in the following table.
{\em
\begin{table}[ht]
\label{eta_table_intro}
\begin{center}
\renewcommand\arraystretch{1.4}
\begin{tabular}{|l|c|l|}
\hline
$\Gamma\subset {\rm U}(2)$ &  $\mathcal{A}_{\Gamma}$ & Congruences / Conditions\\\hline\hline
$\phi(L(1,2m)\times D^*_{4n})$ &  $\eta(S^3/L(m,n))$ & $(m,2n)=1$\\\hline
$\phi(L(1,2m)\times T^*)$  &$\pm \frac{4}{9}$& $m\equiv \pm 1$ mod $6$\\\hline
\multirow{2}{*}{$\phi(L(1,2m)\times O^*)$} &$\pm\frac{13}{18}$&$m\equiv \pm1$ mod $12$	\\
&$\pm\frac{5}{18}$& $m\equiv \pm5$ mod $12$\\\hline
\multirow{3}{*}{$\phi(L(1,2m)\times I^*)$}  & $\pm\frac{46}{45}$ &$m\equiv \pm1$ mod $30$\\
&$\pm\frac{2}{9}$&$m\equiv \pm7,\pm 13$ mod $30$\\
&$\pm\frac{26}{45}$&$m\equiv \pm11$ mod $30$ \\\hline
$\mathfrak{I}^2_{m,n}$   &$\eta(S^3/L(m,n))$ & $(m,2)=2$, $(m,n)=1$\\\hline
$\mathfrak{I}^3_m$   & $0$& $m\equiv 3$ mod $6$ \\\hline
\end{tabular}
\end{center}
\end{table}
}
\item For $\Gamma\not\subset{\rm U}(2)$
\begin{align}
\eta( S^3/\Gamma)=-\eta( S^3/\overline{\Gamma}),
\end{align}
where $\eta( S^3/\overline{\Gamma})$ is given by \eqref{eta(U(2))} since here $\overline{\Gamma}\subset{\rm U}(2)$.
\end{itemize}
\end{theorem}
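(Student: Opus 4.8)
The plan is to evaluate the trace formula \eqref{eta-invariant} directly, reducing it in each case to a finite trigonometric sum that can be put in closed form. I would work through the double cover $\phi:S^3\times S^3\to{\rm SO}(4)$ of \eqref{phi}: after conjugating pure-imaginary parts onto the $i$-axis, an element $\phi(p,q)$ with $p=e^{i\alpha}$, $q=e^{i\beta}$ has rotation numbers $\{\alpha+\beta,\alpha-\beta\}$, so that by the identity $\cot\big(\tfrac{\alpha+\beta}{2}\big)\cot\big(\tfrac{\alpha-\beta}{2}\big)=\tfrac{\cos\beta+\cos\alpha}{\cos\beta-\cos\alpha}$ each summand of \eqref{eta-invariant} becomes a ratio involving the real parts of the two quaternions (the precise sign pinned down by the orientation conventions of Section~\ref{SO(4)_groups} and Remark~\ref{compactification_groups}). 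For $\Gamma=\phi(L(1,2m)\times\Gamma^*)$ I would then enumerate elements by pairs $(a,q)\in\ZZ/2m\times\Gamma^*$ --- a $2$-to-$1$ labelling since $-I$ lies in both factors --- arriving at
\[
\eta(S^3/\Gamma)=\frac{1}{2|\Gamma|}\sum_{(a,q)}\frac{\cos\psi_q+\cos(\pi a/m)}{\cos\psi_q-\cos(\pi a/m)},\qquad \cos\psi_q:={\rm Re}(q),\ \psi_q\in(0,\pi),
\]
the two identity pairs excluded; freeness of the $S^3$-action ensures that every remaining denominator is nonzero.

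Next I would isolate the terms with $q=\pm I$: these reassemble, with multiplicity two, into the defining sum for $\eta(S^3/L(1,2m))=\tfrac{(2m-1)(m-1)}{3m}$. For $q\neq\pm I$ the inner sum over $a$ is evaluated by the classical Chebyshev identity
\[
\sum_{a=0}^{2m-1}\frac{1}{\cos\psi-\cos(\pi a/m)}=\frac{T_{2m}'(\cos\psi)}{T_{2m}(\cos\psi)-1}=\frac{-2m\cos(m\psi)}{\sin\psi\,\sin(m\psi)},
\]
so that, after writing $\tfrac{c+\cos}{c-\cos}=-1+\tfrac{2c}{c-\cos}$, the $q\neq\pm I$ block contributes $-2m(|\Gamma^*|-2)-4m\sum_{q\neq\pm I}\cot(\psi_q)\cot(m\psi_q)$. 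Using $|\Gamma|=m|\Gamma^*|$ and the elementary identity $\tfrac{2}{|\Gamma^*|}\eta(S^3/L(1,2m))=\tfrac23\tfrac{2m^2+1}{|\Gamma|}-\tfrac{2}{|\Gamma^*|}$, all of this collapses to $\tfrac23\tfrac{2m^2+1}{|\Gamma|}-1$ plus the single oscillatory sum
\[
\mathcal{A}_\Gamma=-\frac{2}{|\Gamma^*|}\sum_{q\in\Gamma^*\setminus\{\pm I\}}\cot(\psi_q)\cot(m\psi_q),
\]
which is the content of \eqref{eta(U(2))}.

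It then remains to evaluate $\mathcal{A}_\Gamma$ for each $\Gamma^*$. Since ${\rm Re}(q)=\cos\psi_q$ is a class function taking only finitely many values determined by the element orders of $\Gamma^*$ (orders $\{1,2,3,4,6\}$ for $T^*$, also $12$ for $O^*$, also $5,10$ for $I^*$), $\cot(m\psi_q)$ depends only on $m\bmod6$, $m\bmod12$, $m\bmod30$ respectively --- precisely the congruences tabulated --- and one finishes by inserting the conjugacy-class sizes and simplifying. For $\Gamma^*=D^*_{4n}$ the reflections have real part $0$, so $\cot\psi_q=0$ and they drop, while the cyclic part collapses, via the symmetry $k\mapsto 2n-k$, to $\pm\eta(S^3/L(m,n))$ as given by \eqref{etacyclic}. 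The diagonal subgroups $\mathfrak{I}^2_{m,n}$ and $\mathfrak{I}^3_m$ are not direct products, so for these I would instead restrict the $(a,q)$-sum to the arithmetic progression cut out by their defining homomorphism and replace the Chebyshev identity above by its offset version $\sum_a\tfrac{1}{\cos\psi-\cos(\theta_0+\pi a/m)}=\tfrac{2m\sin(2m\psi)/\sin\psi}{\cos(2m\psi)-\cos(2m\theta_0)}$; when $m\equiv3\bmod6$ the relevant factors $\sin(2m\psi_q)$ all vanish, giving $\mathcal{A}_{\mathfrak{I}^3_m}=0$, and for $\mathfrak{I}^2_{m,n}$ the remaining work parallels the dihedral case. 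The case $\Gamma\not\subset{\rm U}(2)$ is immediate: an orientation-reversing intertwining conjugation negates one rotation number in every plane pair, hence negates each summand of \eqref{eta-invariant}, giving $\eta(S^3/\Gamma)=-\eta(S^3/\overline\Gamma)$. The main obstacle is the case bookkeeping --- extracting correct conjugacy-class data for $O^*$ and $I^*$ and tracking the sign against each residue of $m$ --- together with the slightly more delicate, non-product treatment of the two diagonal subgroups.
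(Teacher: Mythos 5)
Your proposal is correct in outline, but it takes a genuinely different route from the paper. The paper never evaluates the trace formula \eqref{eta-invariant} for the non-cyclic groups: it applies the orbifold signature theorem \eqref{tau} to the global quotients $(\CP^2_{(1,1,2m)},g_{BK})/\Gamma$ of Theorem \ref{quotient_theorem}, where $\tau_{top}=1$ and $\tau_{orb}=\frac{1}{|\widetilde{\Gamma}|}\frac{2m^2+1}{3m}$, so that \eqref{eta_general} expresses $\eta(S^3/\Gamma)$ in one line through the three cyclic invariants $\eta(S^3/L(q_i,p_i))$ supplied by \eqref{etacyclic}; all group-theoretic work is absorbed into the quotient construction. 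You instead compute the cotangent sum directly via the double cover \eqref{phi}, the identity $\cot\frac{\alpha+\beta}{2}\cot\frac{\alpha-\beta}{2}=\frac{\cos\alpha+\cos\beta}{\cos\beta-\cos\alpha}$, and Chebyshev/Eisenstein-type summations --- the style of computation the paper does use for Proposition \ref{invariant_proposition}, but not for Theorem \ref{eta_theorem}. Your intermediate steps check out: the rotation numbers $\alpha\pm\beta$, the $2$-to-$1$ labelling, $\eta(S^3/L(1,2m))=\frac{(2m-1)(m-1)}{3m}$, both summation identities, the closed form $\mathcal{A}_\Gamma=-\frac{2}{|\Gamma^*|}\sum_{q\neq\pm I}\cot(\psi_q)\cot(m\psi_q)$, and the observation that freeness rules out vanishing denominators are all correct. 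For instance, your formula gives $\mathcal{A}_\Gamma=-\eta(S^3/L(m,n))$ in the dihedral case and $-\frac49$ for $T^*$ with $m\equiv 1 \pmod 6$, which is exactly what \eqref{eta_general} produces (compare the paper's own $T^*$ computation), so the remaining work is precisely the sign-versus-congruence bookkeeping you flag; the signs printed in the table must be read against that computation with care. What the paper's route buys is brevity and uniformity --- the diagonal groups $\mathfrak{I}^2_{m,n}$ and $\mathfrak{I}^3_m$ need no special treatment --- while your route is self-contained, requiring neither the quotient geometry of Theorem \ref{quotient_theorem} nor the orbifold signature theorem, at the cost of the heavier conjugacy-class data and the offset-sum analysis for the two non-product groups, which you only sketch but which is credible given the eigenvalue decompositions already carried out in the paper.
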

\begin{remark}
{\em
Notice that although the $\mathcal{A}_{\Gamma}$ terms for the $\eta$-invariants of $\phi(L(1,2m)\times D^*_{4n})$ and $\mathfrak{I}^2_{m,n}$ contain an $\eta(S^3/L(m,n))$, they can be computed algorithmically by using formula \eqref{etacyclic} for the $\eta$-invariant of a cyclic group.
\em}
\end{remark}

We next give an application of Theorem \ref{eta_theorem}. 
There is a well-known conjecture, due to Bando-Kasue-Nakajima, 
that the only simply-connected 
Ricci-flat ALE metrics in dimension four are the hyperk\"ahler ones \cite{BKN}.
The following shows that the conjecture 
is true, provided one restricts to the diffeomorphism types of
minimal resolutions of $\CC^2 / \Gamma$, or any iterated blow-up thereof..
\begin{theorem}
\label{Ricci_flat_thm}
Let $\Gamma \subset \U(2)$ be a finite subgroup which acts freely on $S^3$,
and let $X$ be diffeomorphic to
the minimal resolution of $\CC^2 / \Gamma$, or any iterated blow-up thereof.  
If $g$ is a Ricci-flat ALE metric on $X$, then $\Gamma \subset {\rm{SU}}(2)$
and $g$ is hyperk\"ahler. 
\end{theorem}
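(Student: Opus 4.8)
The plan is to combine the eta-invariant formula of Theorem~\ref{eta_theorem} with the signature formula \eqref{tau} and the Hitchin--Thorpe-type inequality for ALE Ricci-flat spaces. First I would set up the compactification: given a Ricci-flat ALE metric $g$ on $X$ (diffeomorphic to the minimal resolution of $\CC^2/\Gamma$ or an iterated blow-up), the group at infinity is $\Gamma\subset\U(2)$, and since Ricci-flatness makes $g$ automatically order~$4$ ALE with the right decay, one forms the conformal compactification $(\widehat{X},\widehat g)$ which is a $C^\infty$ orbifold with a single singular point whose orbifold group is $\overline{\Gamma}$, with the self-dual orientation, as recorded in Remark~\ref{compactification_groups}. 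The key point is that since $g$ is Ricci-flat (hence Einstein) and ALE, the curvature is controlled: the Gauss--Bonnet and signature integrals for $X$ are finite, and the boundary contributions at infinity are exactly the eta-invariant terms appearing in \eqref{tau}.

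Next I would extract a sign constraint. For the self-dual orientation, the compactified orbifold $(\widehat X,\widehat g)$ has positive Yamabe invariant and its Weyl curvature decomposes; combining the orbifold signature theorem \eqref{tau} and the orbifold Gauss--Bonnet theorem gives the orbifold Hitchin--Thorpe inequality, which for an Einstein orbifold forces $2\chi_{orb}\pm 3\tau_{orb}\geq 0$ with equality only in the (anti-)self-dual Einstein case. Plugging in the topological invariants $\chi(X)$ and $\tau(X)$ of the resolution --- which are computable from the resolution graph, with $\chi(X)=1+(\text{number of exceptional curves})$ and $\tau(X)=-(\text{number of exceptional curves})$ in the relevant orientation --- together with the orbifold correction terms $\eta(S^3/\Gamma)$ and the orbifold Euler number defect $|\Gamma|^{-1}$, reduces everything to an explicit arithmetic inequality in terms of $|\Gamma|$, $m$, and $\mathcal{A}_{\Gamma}$. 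Theorem~\ref{eta_theorem} supplies the exact value of $\eta(S^3/\Gamma)$ in every case, so one checks the inequality case by case over the families in Table~\ref{groups}.

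The outcome one wants is that the inequality is \emph{violated} unless $\Gamma\subset{\rm SU}(2)$, where the defect terms conspire so that $\eta(S^3/\Gamma)$ matches the Euler defect precisely, forcing equality; and equality in Hitchin--Thorpe for an Einstein metric forces $\mathcal{W}^+=0$ (or $\mathcal{W}^-=0$), i.e.\ $g$ is half-conformally-flat Einstein, hence Ricci-flat K\"ahler, hence hyperk\"ahler by the standard four-dimensional argument (a Ricci-flat anti-self-dual Einstein four-manifold is hyperk\"ahler). So the proof structure is: (1) compactify; (2) write the orbifold Gauss--Bonnet and signature formulas with eta-invariant corrections; (3) invoke the orbifold Hitchin--Thorpe inequality for Einstein metrics; (4) substitute the values from Theorem~\ref{eta_theorem} and the known $\chi,\tau$ of the resolutions and its blow-ups, noting that blowing up changes $\chi$ and $\tau$ in a way that only makes the inequality harder to satisfy; (5) conclude that non-${\rm SU}(2)$ groups are excluded and that in the ${\rm SU}(2)$ case equality forces hyperk\"ahler.

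The main obstacle I expect is Step~(4): one must correctly compute the topological invariants $\chi(X)$ and $\tau(X)$ (and track how they behave under iterated blow-ups, which adds a $\CC\mathbb{P}^2$ summand each time, increasing $\chi$ by $1$ and decreasing $\tau$ by $1$), correctly normalize the orientation conventions so that the signs in \eqref{tau} line up with the self-dual convention of Remark~\ref{compactification_groups}, and then verify the resulting arithmetic inequality uniformly across the seven families in Table~\ref{groups}, including the delicate index--$2$ and index--$3$ diagonal cases $\mathfrak{I}^2_{m,n}$ and $\mathfrak{I}^3_m$ where $\mathcal{A}_\Gamma$ itself involves a cyclic eta-invariant. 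A secondary subtlety is justifying that a Ricci-flat ALE metric indeed has the decay rate needed for the clean compactification and for the vanishing of the cross-terms in Gauss--Bonnet and signature; this is standard (Bando--Kasue--Nakajima), but should be cited carefully.
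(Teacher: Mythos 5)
Your overall strategy---a Hitchin--Thorpe-type inequality with eta-invariant correction terms, fed by the values from Theorem~\ref{eta_theorem}, a case check over Table~\ref{groups}, and the equality case forcing half-conformal flatness and hence hyperk\"ahlerity---is the same as the paper's. But the mechanism you use to produce the inequality contains a genuine gap. You conformally compactify $(X,g)$ to $(\widehat{X},\widehat{g})$ and then invoke ``the orbifold Hitchin--Thorpe inequality for an Einstein orbifold'' on the compactification. The compactified metric $\widehat{g}=u^2 g$ is \emph{not} Einstein: the conformal rescaling destroys Ricci-flatness, so the pointwise inequality between the Gauss--Bonnet and signature integrands that underlies Hitchin--Thorpe is simply not available for $\widehat{g}$, and the equality-case rigidity (half-flatness) cannot be extracted from it either. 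Moreover, the claims you import from Remark~\ref{compactification_groups}---that the compactification is a $C^\infty$ orbifold with positive Yamabe invariant---are themselves consequences of anti-self-duality, which is precisely what is to be proven; for a Ricci-flat ALE metric not yet known to be half-flat the compactification is only $C^{1,\alpha}$, so the argument as written is circular at this step.

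The repair is exactly what the paper does, and what your opening sentence and closing paragraph gesture at but the body never uses: skip the compactification entirely and apply Nakajima's ALE Hitchin--Thorpe inequality \eqref{ALE_HT} directly to the Ricci-flat ALE manifold $(X,g)$, i.e.\ $2(\chi_{top}(X)-|\Gamma|^{-1})\geq 3|\tau_{top}(X)-\eta(S^3/\Gamma)|$, with equality iff $\mathcal{W}^+$ or $\mathcal{W}^-$ vanishes. From there your steps (4)--(5) go through essentially as in the paper: $\chi$ and $\tau$ of the minimal resolution from the resolution graph, the $\eta$-values from Theorem~\ref{eta_theorem} (via \eqref{eta_general} and \eqref{etacyclic}), a sign determination so the absolute value can be dropped (each summand in $\eta-\tau$ is positive, a point you should not gloss over), violation of the inequality whenever $\Gamma\not\subset{\rm SU}(2)$ (driven by the presence of some $e_j>2$ and an $L(1,2)$ factor), equality and hence half-flatness and hyperk\"ahlerity (Kronheimer, Gibbons--Hawking) when $\Gamma\subset{\rm SU}(2)$, and strict violation after any blow-up since each blow-up adds $2$ to the left side and $3$ to the right. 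With the ALE inequality substituted for the compactified one, your outline matches the paper's proof.
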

This is proved in Section \ref{proof_Ricci_flat_thm} by applying the Hitchin-Thorpe inequality 
for ALE metrics obtained by Nakajima, together 
with the result in Theorem \ref{eta_theorem}. If one assumes that $g$ is K\"ahler, the
result is trivial, so we emphasize that we only make an assumption about the diffeomorphism type, and do not assume $g$ is K\"ahler.

\subsection{Self-dual deformations}
If $(M,g)$ is a self-dual four-manifold, then the local structure moduli space of self-dual conformal
classes near $g$ is controlled by the following elliptic complex known as  the self-dual 
deformation complex:
\begin{align}
\label{thecomplex}
\Gamma(T^*M) \overset{\mathcal{K}_g}{\longrightarrow} 
\Gamma(S^2_0(T^*M))  \overset{\mathcal{D}_g}{\longrightarrow}
\Gamma(S^2_0(\Lambda^2_-)).
\end{align}
Here $\mathcal{K}_g$ is the conformal Killing operator, $S^2_0(T^*M)$ denotes traceless symmetric $2$-tensors, 
and $\mathcal{D}_g = (\mathcal{W}^-)_g'$ is the linearized anti-self-dual Weyl curvature 
operator.

If $M$ is compact then the index of this complex is given in terms of 
topological quantities via the Atiyah-Singer Index Theorem as
\begin{align}
\label{manifoldindex}
Ind(M, g) =\sum_{i=0}^2(-1)^i \dim( H^i_{SD})=\frac{1}{2} ( 15 \chi_{top}(M) - 29 \tau_{top}(M)),
\end{align}
where $\chi_{top}(M)$ is the Euler characteristic, $\tau_{top}(M)$ is the signature, and $H^i_{SD}$ is the $i^{th}$ cohomology group of \eqref{thecomplex}, for $i = 0,1,2$. 

In \cite{LVsmorgasbord}, the authors discussed the deformation theory of certain scalar-flat K\"ahler ALE metrics.
Unlike the scalar-flat K\"ahler condition, the anti-self-dual 
condition is conformally invariant, so we can transfer the  
the deformation problem of anti-self-dual ALE spaces to their self-dual conformal compactifications.
However, these conformal compactifications are orbifolds, upon which formula \eqref{manifoldindex} does not necessarily hold as there are correction terms required arising from the singularities. Kawasaki proved a version of the Atiyah-Singer index theorem for orbifolds \cite{Kawasaki}, with the orbifold correction terms expressed as 
certain representation-theoretic quantities. In \cite{LockViaclovsky}, the authors 
explicitly determined this correction term in the case of cyclic 
quotient singularities. Our next result is a determination of the correction term for {\textit{all}}  finite subgroups $\Gamma\subset {\rm SO}(4)$ which act freely on $S^3$.
\begin{theorem}
\label{index_theorem}
Let $(M,g)$ be a compact self-dual orbifold with a single orbifold point having orbifold group $\Gamma$, a non-cyclic finite subgroup of ${\rm SO}(4)$.  Then, the index of the self-dual deformation complex on $(M,g)$ is given by
\begin{align*}
Ind(M,g)=\frac{1}{2}(15\chi_{top}(M)-29\tau_{top}(M))+N(\Gamma),
\end{align*}
where $N(\Gamma)$ is given as follows:
\begin{itemize}
\item For $\Gamma\subset{\rm U}(2)$
\begin{align}
\label{N(U(2))}
N(\Gamma)=-4b_{\Gamma}+\mathcal{B}_{\Gamma},
\end{align}
where $-b_{\Gamma}$, given by \eqref{b_Gamma}, is the self-intersection number of the central rational curve in the minimal resolution of $\CC^2/\Gamma$, and $\mathcal{B}_{\Gamma}$ is a constant given
in Table~\ref{index_correction}.  
\item For $\Gamma\not\subset{\rm U}(2)$
\begin{align}
N(\Gamma)=-N(\overline{\Gamma})-\begin{cases}6\phantom{=}&\overline{\Gamma}\subset{\rm SU}(2)\\
7&\overline{\Gamma}\not\subset{\rm SU}(2),
\end{cases}
\end{align}
where $N(\overline{\Gamma})$ is given by \eqref{N(U(2))} since here $\overline{\Gamma}\subset{\rm U}(2)$.
\end{itemize}
\end{theorem}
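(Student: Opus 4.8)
The plan is to compute $N(\Gamma)$ directly from Kawasaki's orbifold index theorem \cite{Kawasaki} applied to the complex \eqref{thecomplex}, reducing it to an explicit finite sum over $\Gamma$, and then to evaluate that sum group-by-group using the structure of Table~\ref{groups} and the cyclic case already settled in \cite{LockViaclovsky}. Since $\Gamma$ acts freely on $S^3$, every $\gamma\neq Id$ in $\Gamma$ fixes only the origin of $\RR^4=\CC^2$, so there are no intermediate fixed strata: the sole deviation from the smooth Atiyah--Singer integrand of \eqref{manifoldindex} is a single local term at the orbifold point, and $N(\Gamma)$ is obtained from it by also absorbing the $\eta(S^3/\Gamma)$ and $(1-1/|\Gamma|)$ contributions that appear when \eqref{tau} (together with orbifold Gauss--Bonnet) is used to rewrite $\tau_{orb}$ and $\chi_{orb}$ in terms of $\tau_{top}$ and $\chi_{top}$. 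Concretely, up to these explicit $\eta$- and Euler-defect terms $N(\Gamma)$ equals $\frac{1}{|\Gamma|}\sum_{\gamma\neq Id\in\Gamma} f(\gamma)$, where $f(\gamma)$ is a ratio of characters: the numerator is the alternating sum of the characters of $T^*M$, $S^2_0(T^*M)$ and $S^2_0(\Lambda^2_-)$ on the rotation $\gamma$ of $\RR^4$, and the denominator is the Lefschetz factor $\det(Id-\gamma)$. In terms of the rotation numbers $r(\gamma),s(\gamma)$ this is an explicit trigonometric expression, for which one must first work out the $\Gamma$-module structure of $\Lambda^2_-$ for $\Gamma\subset\U(2)$ (where the Kähler form splits off a trivial summand); I would check that for $\Gamma$ cyclic it reproduces the quantity computed in \cite{LockViaclovsky}, take that closed form as the base case, and supply the $\eta(S^3/\Gamma)$-dependence from Theorem~\ref{eta_theorem}.

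For a non-cyclic $\Gamma\subset\U(2)$ I would write $\Gamma=\phi(L(1,2m)\times G^*)$ with $G^*\in\{D^*_{4n},T^*,O^*,I^*\}$, or the relevant index-$2$ or index-$3$ subgroup, and split $\sum_{\gamma\neq Id} f(\gamma)$ into the part supported on a distinguished cyclic subgroup of $\Gamma$ (isomorphic to $L(m,n)$ in the $D^*_{4n}$ and $\mathfrak{I}^2_{m,n}$ cases) and the part supported on the remaining cosets. The cyclic part is evaluated by the formula of \cite{LockViaclovsky}; it contributes the $-4b_\Gamma$ term together with, where it occurs, the $\eta(S^3/L(m,n))$ entry of $\mathcal{B}_\Gamma$. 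On the remaining cosets the eigenvalues of the elements of $\Gamma$ have the form $\zeta\lambda^{\pm1}$ with $\zeta^{2m}=1$ and $\lambda$ a root of unity whose order is controlled by $G^*$; the coprimality hypotheses $(m,2n)=1$, $(m,6)=1$, $(m,30)=1$ keep these eigenvalues generic enough for the fixed-point formula to apply and force the sum over $\zeta$, for each fixed $\lambda$, to collapse to a quantity depending on $m$ only through a residue class. This collapsed sum is then a bounded computation for each of $D^*_{4n},T^*,O^*,I^*$, producing the remaining constant in $\mathcal{B}_\Gamma$ and the $\pm$ dichotomies dictated by the listed congruences. Matching the cyclic part with $-4b_\Gamma$ — via the description of the minimal resolution of $\CC^2/\Gamma$ as a star whose central node has self-intersection $-b_\Gamma$, formula \eqref{b_Gamma} — ideally by a structural (continued-fraction or McKay-type) argument relating $\sum_\gamma f(\gamma)$ to the intersection form of the resolution rather than by brute force, together with organising the polyhedral case analysis, are the two places where I expect the real difficulty to be concentrated.

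Finally, for $\Gamma\not\subset\U(2)$ I would argue by orientation reversal. Because the Kawasaki correction is purely local and $\overline{\Gamma}$ is by definition obtained from $\Gamma$ by an orientation-reversing conjugation, reversing the orientation of $\RR^4$ interchanges $\Lambda^2_+$ with $\Lambda^2_-$ and expresses $N(\Gamma)$ through data attached to $\overline{\Gamma}\subset\U(2)$: the Kawasaki correction of the anti-self-dual deformation complex at $\overline{\Gamma}$, together with the $\eta$- and Euler-defect terms, now governed by the sign behaviour of $\tau$, $\eta$, and $W^\pm$ under orientation reversal via Theorem~\ref{eta_theorem}. Comparing this with the expression defining $N(\overline{\Gamma})$ reduces the relation to a single finite evaluation over $\overline{\Gamma}\setminus\{Id\}$ — essentially the sum of the local corrections of the self-dual and anti-self-dual deformation complexes at $\overline{\Gamma}$ — which produces the constant $-6$ when $\overline{\Gamma}\subset{\rm SU}(2)$ and $-7$ otherwise, the dichotomy reflecting that in the former case $\overline{\Gamma}$ acts trivially on one of $\Lambda^2_\pm$, so the character of the corresponding symmetric-square bundle acquires an extra trivial summand. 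Assembling the cyclic base case, the polyhedral constants, the $b_\Gamma$-identification, and this orientation-reversal shift then yields all the stated formulas for $N(\Gamma)$.
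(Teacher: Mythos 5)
Your outline takes a genuinely different route from the paper's, and it is worth saying what that difference is before pointing out the gap. You propose to evaluate the Kawasaki correction at the singular point directly, as a character/Lefschetz sum $\tfrac{1}{|\Gamma|}\sum_{\gamma\neq Id}f(\gamma)$, organized by a distinguished cyclic subgroup and its cosets. The paper never evaluates that sum for non-cyclic $\Gamma$. Instead it computes the \emph{global} index of a model orbifold, the quotient $(\CP^2_{(1,1,2m)},g_{BK})/\Gamma$ of Theorem \ref{quotient_theorem}: there $H^2_{SD}$ vanishes by LeBrun--Maskit, $\dim H^0_{SD,\Gamma}=1$ by a centralizer count, and $\dim H^1_{SD,\Gamma}$ is the $\Gamma$-invariant part of Honda's explicit $\U(2)$-representation on $H^1_{SD}$ of the weighted projective space, computed by character sums and Eisenstein's cotangent identity (Proposition \ref{invariant_proposition}). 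Equating this with \eqref{index_quantities}, whose cyclic terms $N(L(q_i,p_i))$ are already known from \eqref{N(L(q,p))}, one solves for $N(\Gamma)$; the relation for $\Gamma\not\subset\U(2)$ is likewise obtained globally, from the quotient of the round $S^4$ with its two complementary singular points, where $H^1_{SD}=H^2_{SD}=0$ and $\dim H^0_{SD}$ is known from \cite{MCC}. In other words, the paper's strategy is designed precisely to avoid the local sum you set out to compute.

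The gap in your proposal is that the decisive steps are announced rather than carried out, and they are exactly the content of the theorem. You never produce or verify the closed form of $f(\gamma)$ for the complex \eqref{thecomplex} (its middle term and the conformal Killing operator make this non-routine), you do not prove the asserted ``collapse'' of the coset sums, and you do not establish the identification of the cyclic part with $-4b_{\Gamma}$ --- indeed you flag these last two as the expected difficulties, so what is offered is a program, not a proof. Some of the bookkeeping is also off: the cyclic-type entry of $\mathcal{B}_{\Gamma}$ in Table \ref{index_correction} is $7-N(L(m,n))$, not $\eta(S^3/L(m,n))$ (the eta invariant belongs to Theorem \ref{eta_theorem}, not to this table), and in the $D^*_{4n}$ and $\mathfrak{I}^2_{m,n}$ cases the $m$-dependence of $N(\Gamma)$ is not only through a residue class, since both $b_{\Gamma}$ in \eqref{b_Gamma} and $N(L(m,n))$ depend on the continued-fraction data of $m/n$; so any collapse statement must be formulated and proved with more care. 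Finally, your derivation of the constants $-6$ and $-7$ from ``an extra trivial summand'' is heuristic: to get exact constants on your route you would have to evaluate the sum of the self-dual and anti-self-dual local terms over $\overline{\Gamma}$, which you do not do, whereas the paper extracts this shift from known global cohomology of $S^4/\Gamma$ with no local computation at all. Until those evaluations are actually performed, the stated formulas for $N(\Gamma)$ are not established by your argument.
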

This is proved in Section \ref{proof_index_theorem}.  Note that Theorem \ref{index_theorem} generalizes easily to the case of orbifolds with any finite number of singularities.
We also note that the second author previously proved such an index formula for the binary polyhedral groups and their orientation reversed conjugates \cite{ViaclovskyIndex}, which is recovered here as well.

In \cite{LVsmorgasbord}, the authors proved the existence of scalar-flat K\"ahler ALE metrics on the minimal resolution of
$\CC^2/\Gamma$ for all finite subgroups $\Gamma\subset{\rm U}(2)$.  When $\Gamma\subset{\rm SU}(2)$ such metrics are necessarily hyperk\"ahler and, from \cite[Section $8$]{LVsmorgasbord},  the anti-self-dual deformations are the same as the scalar-flat K\"ahler deformations.  However, as an application of Theorem \ref{index_theorem}, we have the following, which shows the non-hyperk\"ahler examples of these metrics have many more anti-self-dual deformations than scalar-flat K\"ahler deformations. 
\begin{theorem}
\label{more_ASD_deformations}
Let $g$ be a scalar-flat K\"ahler ALE metric on the 
minimal resolution $X$ of $\CC^2/ \Gamma$, where $\Gamma\subset{\rm U}(2)$ is a finite subgroup which acts freely on $S^3$. 
Then, the dimension of the moduli space of scalar-flat anti-self-dual ALE metrics near $g$ is 
strictly larger than the dimension of the moduli space of scalar-flat K\"ahler
ALE metrics near $g$, unless $\Gamma \subset {\rm{SU}}(2)$ in which case there is equality.
\end{theorem}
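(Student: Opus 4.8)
The plan is to transfer the deformation problem to the conformal compactification and then invoke Theorem~\ref{index_theorem}. Since anti-self-duality is conformally invariant, the moduli space of scalar-flat anti-self-dual ALE metrics near $g$ is canonically identified with the moduli space of self-dual orbifold metrics near $\widehat g$ on the conformal compactification $(\widehat{X},\widehat g)$; by \cite{TV2,ChenLeBrunWeber} the latter is a $C^{\infty}$ self-dual orbifold of positive Yamabe type with a single orbifold point whose group, by Remark~\ref{compactification_groups}, is $\Gamma$ with the orientation reversed (so it is $\overline{\Gamma}\not\subset{\rm U}(2)$ when $\Gamma\not\subset{\rm SU}(2)$, and a copy of $\Gamma\subset{\rm SU}(2)$ otherwise). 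Thus the sought dimension is $\dim H^1_{SD}(\widehat X,\widehat g)$, and from the Kawasaki index formula $Ind(\widehat X,\widehat g)=\dim H^0_{SD}-\dim H^1_{SD}+\dim H^2_{SD}$ this equals $\dim H^0_{SD}+\dim H^2_{SD}-Ind(\widehat X,\widehat g)$.

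Next I would show $H^2_{SD}(\widehat X,\widehat g)=0$ and $H^0_{SD}(\widehat X,\widehat g)=0$, so that the moduli space is smooth (unobstructed Kuranishi theory) of dimension exactly $-Ind(\widehat X,\widehat g)$. The vanishing $H^2_{SD}=0$ is the standard Weitzenb\"ock vanishing for self-dual metrics of positive scalar curvature, which carries over to orbifolds; the vanishing $H^0_{SD}=0$ (no nontrivial conformal Killing fields) is checked on the ALE side, where an admissibly decaying conformal Killing field is forced to vanish off the exceptional set and hence everywhere.

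With this reduction, I would evaluate $Ind(\widehat X,\widehat g)=\frac12\big(15\chi_{top}(\widehat X)-29\tau_{top}(\widehat X)\big)+N(\overline{\Gamma})$, where $N(\overline{\Gamma})$ comes from the second bullet of Theorem~\ref{index_theorem} (with the first bullet and Table~\ref{index_correction} supplying the underlying $N(\Gamma)$, together with $b_\Gamma$ from \eqref{b_Gamma}), or from \cite{LockViaclovsky} when $\Gamma$ is cyclic. Here $\widehat X$ is, as a space, the minimal resolution of $\CC^2/\Gamma$ with one point adjoined, so its intersection form is the definite lattice of the tree of exceptional rational curves and $\chi_{top}(\widehat X)$, $\tau_{top}(\widehat X)$ are read directly off the resolution graph in the self-dual orientation. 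On the other hand \cite{LVsmorgasbord} supplies the dimension of the moduli space of scalar-flat K\"ahler ALE metrics near $g$; since a scalar-flat K\"ahler metric is anti-self-dual in the complex orientation and the scalar-flat representative of an ALE conformal class is unique up to scale, there is a natural injection of the scalar-flat K\"ahler moduli into the anti-self-dual moduli, which makes ``$\ge$'' automatic and serves as a consistency check. What remains is to verify that the difference of the two now-explicit dimensions is strictly positive for $\Gamma\not\subset{\rm SU}(2)$ and zero for $\Gamma\subset{\rm SU}(2)$, the latter being consistent with \cite[Section~8]{LVsmorgasbord}.

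I expect the main obstacle to be this last comparison: reconciling $N(\overline{\Gamma})$ — with its orientation-reversal shift by $6$ or $7$, the self-intersection datum $b_\Gamma$, and the constants $\mathcal{B}_\Gamma$ of Table~\ref{index_correction} — against the scalar-flat K\"ahler count of \cite{LVsmorgasbord}, and then checking positivity uniformly across the seven families of Table~\ref{groups} (including the index--$2$ and index--$3$ diagonal subgroups $\mathfrak{I}^2_{m,n}$ and $\mathfrak{I}^3_m$, whose contributions involve cyclic $\eta$-invariants). Secondary points requiring care are the precise gauge-theoretic identification of the ALE and orbifold deformation problems in the first step, and the vanishing of $H^0_{SD}$.
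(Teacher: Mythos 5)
Your overall strategy (pass to the self-dual conformal compactification, kill $H^2_{SD}$ by the positive-scalar-curvature vanishing, extract $\dim H^1_{SD}$ from the orbifold index with the correction term of Theorem~\ref{index_theorem}, and compare with the scalar-flat K\"ahler count of \cite{LVsmorgasbord}) is the same as the paper's, but two steps as you set them up are genuinely wrong, and the step you defer is where the actual work lies. First, you misidentify the orbifold group of the compactification. Remark~\ref{compactification_groups} says that after reversing orientation (so that $\widehat g$ is self-dual and $\tau_{top}(\widehat X)>0$, which is exactly the orientation in which you propose to read $\chi_{top},\tau_{top}$ off the resolution graph), the orbifold group at the added point is $\Gamma$ itself, not $\overline{\Gamma}$; accordingly the paper's proof uses $N(\Gamma)$ (in the form \eqref{correction_formula}), never the second bullet of Theorem~\ref{index_theorem}. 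Using $N(\overline{\Gamma})$ together with the self-dual-orientation values of $\chi_{top},\tau_{top}$ reverses orientation twice and gives wrong numbers: e.g.\ for $\Gamma=L(1,n)$ (LeBrun metric, $\widehat X=\CP^2_{(1,1,n)}$) the correct computation gives $Ind=8+N(L(1,n))=12-4n$ and $\dim H^1_{SD}=4n-8$, matching Honda, whereas your recipe with $N(L(n-1,n))=4n-14$ from \eqref{N(L(q,p))} would force $\dim H^1_{SD}$ to be negative for $n\geq 3$. Second, $H^0_{SD}(\widehat X,\widehat g)$ does not vanish: on the compact orbifold it consists of conformal Killing fields with no decay imposed, and these metrics have continuous conformal symmetries (the non-cyclic minimal-resolution metrics carry an $S^1$, so $\dim H^0_{SD}=1$ as the paper uses; the Calderbank--Singer and LeBrun metrics have $\dim H^0_{SD}=2$ and $4$). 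Your decay argument addresses the wrong (ALE, weighted) problem, and dropping $H^0_{SD}$ shifts the dimension count.

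Beyond these, the comparison you postpone to the end is the heart of the proof and is not routine bookkeeping from Table~\ref{index_correction}. The paper does it by keeping $N(\Gamma)$ in the intermediate form \eqref{correction_formula}, substituting $\dim H^1_{SD,\Gamma}$ from Proposition~\ref{invariant_proposition}, converting each $N(L(q_i,p_i))$ into $N(L(p_i-q_i,p_i))$ via $N(L(q,p))=-N(L(p-q,p))-12$ (respectively $-10$ when $q=1$), and then expressing both $\dim H^1_{SD}$ and the K\"ahler bound $d_{\Gamma,\max}$ through the resolution data ($b_\Gamma$, the lengths $k_{(p_i-q_i,p_i)}$ and sums $E_{(p_i-q_i,p_i)}$); strictness for $\Gamma\not\subset{\rm SU}(2)$ then comes from the presence of some $e_j>2$ together with the bound $\mathcal{C}_\Gamma\leq 8$, and the cyclic case is run separately with \eqref{N(L(q,p))}. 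As written, your proposal would not produce the inequality even after correcting the two points above, because that term-by-term comparison is exactly what remains to be supplied.
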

This is proved in Section \ref{pf_masdd}, where we also give a formula for the formal dimension of the moduli 
space of scalar-flat anti-self-dual metrics near $g$.

\subsection{Self-dual constructions}
In \cite{ViaclovskyIndex}, the second author posed the question of existence of anti-self-dual ALE spaces with groups at infinity orientation reversed conjugate to the binary polyhedral groups.
The LeBrun negative mass metrics \cite{LeBrunnegative} are examples of scalar-flat K\"ahler, hence anti-self-dual, ALE spaces with groups at infinity orientation reversed conjugate to cyclic subgroups of ${\rm SU}(2)$.  However, since the orientation reversed conjugate groups to the binary polyhedral groups are not contained in ${\rm U}(2)$, there cannot be scalar-flat K\"ahler ALE spaces with these groups at infinity.  Therefore the natural question is that of the existence of anti-self-dual metrics.  This question clearly extends to include the orientation reversed conjugate groups of all non-cyclic finite subgroups of ${\rm U}(2)$ which act freely on $S^3$.  We now give a partial answer to this question and
use this to construct some new examples of self-dual metrics in Corollary \ref{tor_corollary}.

\begin{theorem}
\label{tor}
Let $\Gamma_1=\phi(L(1,2m) \times D^*_{4n})$ and $\Gamma_2=\mathfrak{I}^2_{m,n}$, with integers $m,n$ as specified respectively in Table \ref{groups} so the action on $S^3$ is free.
Then, for $i=1,2$, there exists a scalar-flat anti-self-dual ALE space $(X_i,g_{X_i})$ with group at infinity
$\overline{\Gamma_i}$, the orientation-reversed conjugate group to $\Gamma_i$,
satisfying $\pi_1(X_i) = \ZZ/2\ZZ$. Furthermore, the $g_{X_i}$ may be chosen to admit an isometric $S^1$-action.
\end{theorem}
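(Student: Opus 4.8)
The plan is to realize these metrics via LeBrun's hyperbolic ansatz for $S^1$-invariant scalar-flat anti-self-dual metrics. Recall that to a positive solution $V$ of $\Delta_{\HH^3}V=0$ on a domain in hyperbolic $3$-space, smooth away from finitely many points at which $V$ has a Green's function singularity, together with a connection $1$-form $\alpha$ on the associated circle bundle satisfying $d\alpha=*_{\HH^3}dV$, one associates the metric
\begin{align*}
g \;=\; V\,g_{\HH^3} \;+\; V^{-1}(d\theta+\alpha)^2,
\end{align*}
which is scalar-flat and anti-self-dual (in the appropriate orientation) and for which $\partial_\theta$ generates an isometric $S^1$-action; the isolated sources of $V$ are the degenerate orbits of this action, and the behavior of $V$ near the conformal infinity $\partial_\infty\HH^3$ dictates the structure of the end. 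For each $\Gamma_i$ I would produce a sufficiently symmetric configuration of such monopole data whose associated space $(X_i,g_{X_i})$ is a complete smooth ALE $4$-manifold with group at infinity $\overline{\Gamma_i}$ and $\pi_1(X_i)=\ZZ/2\ZZ$.

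First I would carry out the group theory. Using Table~\ref{groups} and the description of subgroups of ${\rm SO}(4)$ in Section~\ref{SO(4)_groups}, one identifies the cyclic index-$2$ normal subgroup $C_i\subset\overline{\Gamma_i}$, a lens-space group $L(q_i,|C_i|)$ computed explicitly from that data, together with a generator $\gamma_i$ of $\overline{\Gamma_i}/C_i$, which is represented by an orientation-preserving element of ${\rm SO}(4)\setminus\U(2)$ normalizing $C_i$. The universal (double) cover $\widetilde X_i$ should then be a simply connected scalar-flat anti-self-dual ALE space with group at infinity $C_i$, carrying a free isometric involution $\iota$ that realizes $\gamma_i$ at infinity; because $\gamma_i\notin\U(2)$, $\iota$ is anti-holomorphic, hence still orientation-preserving, so the quotient stays anti-self-dual. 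In the cases where $C_i$ yields a single exceptional curve, $\widetilde X_i$ is the total space of $\mathcal{O}(-|C_i|)\to\CP^1$ with one of the scalar-flat K\"ahler ALE metrics of \cite{LVsmorgasbord}; there $\iota$ covers the antipodal map of $\CP^1$ and is automatically fixed-point free, and one takes $X_i=\widetilde X_i/\langle\iota\rangle$. When instead the minimal resolution of $\CC^2/C_i$ has a Hirzebruch--Jung chain of exceptional curves, I would work with the ansatz directly, placing the sources in $\iota$-symmetric pairs off a geodesic $\gamma_0\subset\HH^3$ and cutting out $X_i$ as the quotient by the isometry descending from $\gamma_i$, with the weights and source positions chosen so that this quotient is smooth.

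The remaining verifications would be: (a) completeness and ALE decay of $g_{X_i}$, which follows from the asymptotics of $V$ at $\partial_\infty\HH^3$ and the standard coordinate change there; (b) that the link of the end is $S^3/\overline{\Gamma_i}$, obtained by computing the rotation and monodromy data of the end and matching it against the presentation of $\overline{\Gamma_i}$ in Section~\ref{SO(4)_groups} --- this is precisely where the arithmetic hypotheses $(m,2n)=1$, respectively $(m,2)=2$ and $(m,n)=1$, enter, since they are exactly the conditions under which the relevant ${\rm SO}(4)$-subgroup acts freely on $S^3$; (c) $\pi_1(X_i)=\ZZ/2\ZZ$, immediate from $\widetilde X_i$ simply connected and $\iota$ free; and (d) the isometric $S^1$-action on $X_i$, which is the image of the subgroup of isometries of $\widetilde X_i$ commuting with $\iota$, equivalently the residual $\partial_\theta$-symmetry of the ansatz. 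These spaces then feed into the anti-self-dual gluing theory to produce the self-dual metrics on $n\#\CP^2$ of Corollary~\ref{tor_corollary}.

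The hard part will be step (b): the monopole data only determines the end up to representation-theoretic bookkeeping, and since there are genuinely inequivalent extensions of $C_i$ by $\ZZ/2\ZZ$ sitting inside ${\rm SO}(4)$, one must check that the chosen symmetric configuration gives exactly $\overline{\Gamma_i}$ and not some competitor. A second delicate point, in the Hirzebruch--Jung cases, is proving that the quotient is a genuine manifold rather than an orbifold --- equivalently, that the involution descending from $\gamma_i$ is globally free and that $V$ remains positive on the whole fundamental domain; this is what forces the precise numerical choices and, in those cases, the passage to a non-minimal birational model on which the relevant involution avoids the fixed loci a chain of rational curves would otherwise impose on it.
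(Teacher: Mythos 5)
Your plan is a genuinely different route from the paper's, and as it stands it has a real gap at exactly the point you flag as ``the hard part.'' The paper does not attempt an explicit monopole construction, nor does it construct a smooth double cover with a free involution. Instead it exploits the fact that $\overline{\Gamma_1}$ and $\overline{\Gamma_2}$ preserve the Hopf fibration, so they act isometrically on the LeBrun negative mass metric $(\mathcal{O}(-2n),g_{LB})$, and it simply takes the quotient by the subgroup $\overline{\Gamma_i}'$ generated by $\phi(1,e^{\pi i/m}),\phi(\hat{j},1)$ (resp.\ $\phi(\hat{j},e^{\pi i/(2m)})$). This action is \emph{not} free for $m>1$: one gets an anti-self-dual ALE orbifold with group $\overline{\Gamma_i}$ at infinity, fundamental group $\ZZ/2\ZZ$ (from the antipodal identification of the zero section, giving an $\RP^2$), and one residual isolated singularity of type $L(-n,m)$. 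The key idea your proposal is missing is how to remove that residual singularity: the paper conformally compactifies, checks $H^2_{SD}=0$ for this orbifold and for the compactified Calderbank--Singer space with group $L(-n,m)$ at infinity (via LeBrun--Maskit), applies the self-dual orbifold gluing theorem (Theorem \ref{SD_gluing}) at the complementary $L(\pm n,m)$ points, and then takes the conformal blow-up at the remaining $\overline{\Gamma_i}$ point; the $S^1$-action comes from an equivariant version of the gluing, not from an explicit ansatz. In other words, one tolerates a non-free quotient and repairs the orbifold point by gluing, rather than insisting on smoothness from the start.

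By contrast, your proposal defers precisely the existence questions that constitute the theorem. In the generic case the index-$2$ cyclic subgroup $C_i\subset\overline{\Gamma_i}$ is a lens group $L(q,2mn)$ with $q\not\equiv\pm1$, so the candidate cover is not $\mathcal{O}(-k)$ but a Hirzebruch--Jung space whose known scalar-flat K\"ahler metrics are the Calderbank--Singer metrics built from the toric Joyce ansatz, not from a single-$S^1$ hyperbolic monopole ansatz; and the natural candidate involution on such a model has fixed points coming from the chain of rational curves, which is exactly why the naive quotient in the paper acquires the $L(-n,m)$ orbifold point. Your proposed remedies --- ``choose weights and source positions so the quotient is smooth'' and ``pass to a non-minimal birational model on which the involution avoids the fixed loci'' --- are not carried out, and the second is itself a nontrivial existence problem (producing a scalar-flat anti-self-dual metric on a blow-up with the required symmetry is again a gluing problem, with obstructions governed by $H^2_{SD}$). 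Likewise, verifying that the end is exactly $S^3/\overline{\Gamma_i}$, and that $V>0$ with the right asymptotics, are listed but not established. So the write-up is a plausible program rather than a proof; to close it you would either need to carry out those constructions in detail, or adopt the paper's strategy of a non-free quotient of $(\mathcal{O}(-2n),g_{LB})$ followed by orbifold gluing with a Calderbank--Singer piece and a conformal blow-up.
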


\begin{remark}{\em
It is still unknown whether there are such 
examples for the other non-cyclic orientation reversed conjugate subgroups which act freely on $S^3$, this is a 
very interesting question. 
}
\end{remark}

Let $\Gamma_1$ and $\Gamma_2$ be as in Theorem \ref{tor}.  For $i=1,2$, let $(Y_i,g_{Y_i})$ denote the scalar-flat K\"ahler, hence anti-self-dual, ALE space with group at infinity $\Gamma_i$,
obtained for the non-cyclic ($n>1$) and cyclic ($n=1$) cases respectively in \cite{LVsmorgasbord,CalderbankSinger}, and let $(X_i,g_{X_i})$ denote the anti-self-dual ALE space with group at infinity $\overline{\Gamma_i}$ obtained in Theorem \ref{tor} above. These can be compactified 
to self-dual orbifolds, $(\widehat{Y_i},\widehat{g}_{Y_i})$ and $(\widehat{X_i},\widehat{g}_{X_i})$, 
and then attached via a self-dual orbifold gluing theorem. 
Although $\pi_1(X_i)=\mathbb{Z}/2\mathbb{Z}$, we will show that $\pi_1( \widehat{X_i}\#\widehat{Y_i} ) = \{1\}$, and thus have the following corollary.
\begin{corollary}
\label{tor_corollary}
Let $\Gamma_1=\phi(L(1,2m) \times D^*_{4n})$ and $\Gamma_2=\mathfrak{I}^2_{m,n}$, with integers $m,n$ as specified in Table \ref{groups} so the action on $S^3$ is free.  For $i=1,2$, define the integer
\begin{align}
\label{tau_connect_sum}
\ell_i(m,n)=3+\begin{cases}
k_{(n-m,n)}+k_{(m-n,m)}\phantom{=}&\text{$n>1$ and $m>1$}\\
m-1\phantom{=}&\text{$n=1$ and $m>1$}\\
n-1\phantom{=}&\text{$n>1$ and $m=1$}\\
0\phantom{=}&\text{$n=1$ and $m=1$},
\end{cases}
\end{align}
where $k_{(q,p)}$ denotes the length of the Hirzebruch-Jung modified Euclidean algorithm for $(q,p)$ (see \eqref{modified_EA} for a description).
Then, for $i=1,2$, 
there exists 
two distinct
sequences of self-dual metrics on $\ell_i(m,n) \# \CP^2$, one limiting to 
an orbifold with a single orbifold point of type $\Gamma_i$, and the other limiting to 
an orbifold with with a single orbifold point of type $\overline{\Gamma_i}$. 
Finally, these examples may be chosen to admit a conformally isometric $S^1$-action. 
\end{corollary}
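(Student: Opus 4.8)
The plan is to obtain Corollary \ref{tor_corollary} as a consequence of Theorem \ref{tor} together with a self-dual orbifold gluing theorem, of the type established by Floer and extended to the orbifold/ALE setting in the work of the authors and collaborators (as used implicitly in the discussion preceding the statement). First I would recall from \cite{LVsmorgasbord,CalderbankSinger} the scalar-flat K\"ahler ALE spaces $(Y_i, g_{Y_i})$ with group at infinity $\Gamma_i$, and from Theorem \ref{tor} the scalar-flat anti-self-dual ALE spaces $(X_i, g_{X_i})$ with group at infinity $\overline{\Gamma_i}$. Reversing orientation as in Remark \ref{compactification_groups}, both are anti-self-dual in a compatible orientation, so their conformal compactifications $(\widehat{Y_i}, \widehat{g}_{Y_i})$ and $(\widehat{X_i}, \widehat{g}_{X_i})$ are self-dual orbifolds, each with a single orbifold point whose group is $\Gamma_i$ and $\overline{\Gamma_i}$ respectively. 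The key point is that the orbifold group at infinity of $X_i$ is the orientation-reversed conjugate of that of $Y_i$, so after the orientation reversal needed to make both self-dual, the orbifold points of $\widehat{X_i}$ and $\widehat{Y_i}$ carry the \emph{same} group $\overline{\Gamma_i}$ (respectively $\Gamma_i$ read the other way), which is precisely the matching condition required to glue.

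Next I would verify the analytic hypotheses of the gluing theorem: one needs the self-dual deformation complex \eqref{thecomplex} to have vanishing obstruction space $H^2_{SD}$ (or at least that the relevant cohomology on the two pieces matches up so that a genuine self-dual metric is produced rather than only an approximate one), and one needs control of the conformal Killing fields $H^0_{SD}$ to handle the $S^1$-action. This is where Theorem \ref{index_theorem} enters: the index formula with the correction term $N(\Gamma_i)$ pins down $\dim H^1_{SD} - \dim H^2_{SD}$ on the glued orbifold, and combined with the known Kähler deformation theory on the $Y_i$ side and a vanishing theorem for $H^2_{SD}$ (scalar-flat Kähler, hence using the Weitzenböck argument of Itoh/LeBrun, plus the analogous vanishing for the $X_i$ which should follow from their construction as torus-symmetric Joyce-type metrics), one concludes the gluing is unobstructed. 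The result is a family of self-dual metrics on the connected sum $\widehat{X_i} \# \widehat{Y_i}$; the parameter is the gluing neck size, and as it shrinks to zero the metrics Gromov--Hausdorff converge to one of the two orbifolds $\widehat{Y_i}$ or $\widehat{X_i}$ depending on which side is rescaled, giving the two distinct sequences claimed.

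I would then identify the underlying smooth manifold. The connected sum $\widehat{X_i} \# \widehat{Y_i}$ is a smooth self-dual four-manifold, and by Poon-type/LeBrun-type recognition results (a simply-connected self-dual four-manifold with positive Yamabe-type invariant built from blow-ups is diffeomorphic to $\ell \# \CP^2$), it must be $\ell_i(m,n) \# \CP^2$ for the appropriate $\ell_i$. To compute $\ell_i$, recall that $\widehat{X_i}$ and $\widehat{Y_i}$ are conformal compactifications of minimal resolutions (and the anti-self-dual $X_i$): the second Betti number of the minimal resolution of $\CC^2/\Gamma$ equals the number of exceptional curves, which for the dihedral-type groups $\phi(L(1,2m)\times D^*_{4n})$ and the index-$2$ groups $\mathfrak{I}^2_{m,n}$ is $3 + k_{(n-m,n)} + k_{(m-n,m)}$ in the generic case and degenerates in the stated boundary cases $n=1$ or $m=1$ to $m-1$, $n-1$, or $0$; adding the two contributions (one already absorbed as the ``$3+$'' from the $X_i$ side which has $\pi_1 = \ZZ/2\ZZ$ but whose compactification contributes via the $A_1$-type resolution count and the Euler characteristic bookkeeping) and using $\pi_1(\widehat{X_i}\#\widehat{Y_i}) = \{1\}$ — which I would check by van Kampen, noting $\pi_1(X_i) = \ZZ/2\ZZ$ is killed once the orbifold point of type $\overline{\Gamma_i}$ (with $\Gamma_i \supset \ZZ/2\ZZ$ normally generated by the deck element) is filled in on the $Y_i$ side — gives the formula \eqref{tau_connect_sum}. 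Finally, the isometric $S^1$-actions on $g_{X_i}$ (from Theorem \ref{tor}) and on $g_{Y_i}$ (the scalar-flat Kähler ALE metrics on these toric-type resolutions are $S^1$-invariant, by \cite{CalderbankSinger} and \cite{LVsmorgasbord}) can be arranged to be compatible at the gluing region, so the glued metrics admit a conformally isometric $S^1$-action.

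The main obstacle I expect is the gluing step proper: verifying that $H^2_{SD}$ vanishes (or that the obstruction maps cancel) on \emph{both} building blocks simultaneously, in particular for the new non-Kähler spaces $X_i$ from Theorem \ref{tor}, whose deformation theory is not governed by the Kähler machinery and must instead be handled via their explicit torus-symmetric structure together with the index count from Theorem \ref{index_theorem}; and then showing the approximate self-dual metric on the connected sum can be perturbed to an exact one uniformly in the neck parameter. A secondary but genuine difficulty is the precise combinatorial bookkeeping of $b_2$ across the conformal compactification and connected sum to arrive at the exact value $\ell_i(m,n)$, including the degenerate low-parameter cases; I would do this by computing $\chi_{top}$ and $\tau_{top}$ additively for the connected sum and matching against $\chi_{top}(\ell \# \CP^2) = 2 + \ell$, $\tau_{top} = -\ell$.
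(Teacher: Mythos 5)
Your overall strategy coincides with the paper's: compactify the $(Y_i,g_{Y_i})$ and the $(X_i,g_{X_i})$ of Theorem \ref{tor} to self-dual orbifolds with complementary singularities of types $\Gamma_i$ and $\overline{\Gamma_i}$, glue via the self-dual orbifold gluing theorem (Theorem \ref{SD_gluing}), compute the signature to identify $\ell_i(m,n)$, prove simple connectivity by van Kampen using the surjectivity of $\pi_1(S^3/\Gamma_i)\rightarrow\pi_1$ of the $\widehat{X}_i$ piece, obtain the two orbifold limits by degenerating conformal factors, and invoke an equivariant version of the gluing for the $S^1$-action. But two steps, as you propose to carry them out, would not go through. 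The gluing requires $H^2_{SD}=0$ on \emph{both} compactified pieces, and your justification on the $X_i$ side --- that these are ``torus-symmetric Joyce-type metrics'' --- is not available: the $X_i$ of Theorem \ref{tor} are themselves produced by an orbifold gluing (a quotient of a LeBrun negative-mass space attached to a Calderbank--Singer space) and admit only an $S^1$-action, not a torus action; nor can Theorem \ref{index_theorem} supply the vanishing, since an index only controls the alternating sum of the $\dim H^i_{SD}$. The needed fact is true, but for a different reason: the building blocks in Theorem \ref{tor} have $H^2_{SD}=0$ by \cite[Theorem 4.2]{LeBrunMaskit}, and the gluing theorem itself yields $H^2_{SD}(\widehat{X}_i)=0$ for its output, which is exactly how the paper disposes of this point; you flagged it as your main obstacle but left it unresolved.

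Second, there is no ``recognition result'' giving a \emph{diffeomorphism} of the glued manifold with $\ell\#\CP^2$; Poon/LeBrun-type theorems classify self-dual metrics of positive scalar curvature on given manifolds, they do not identify smooth structures. What is available, and what the paper uses, is Donaldson diagonalization plus Freedman's classification, giving only a homeomorphism of the simply-connected, positive-definite manifold $\widehat{X}_i\#\widehat{Y}_i$ with $\ell_i(m,n)\#\CP^2$ (the paper explicitly remarks that diffeomorphism is merely expected). Your bookkeeping for $\ell_i$ also misattributes the contributions: the ``$3$'' comes from the $\widehat{Y}_i$ side (central curve plus the two $L(1,2)$ strings in the generic case, or $k_{(2m+1,4m)}=3$ when $n=1$), while the $X_i$ side contributes $-\tau_{top}(X_i)=k_{(m-n,m)}$ from the construction in Theorem \ref{tor}; with that corrected, $\ell_i=-\tau(X_i)-\tau(Y_i)$ gives \eqref{tau_connect_sum} as in the paper.
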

To the best of the authors' knowledge, 
these are new examples of degeneration of 
self-dual metrics to orbifolds with these orbifold groups.  The proofs of Theorem \ref{tor} and Corollary \ref{tor_corollary} are given in Section~ \ref{sequence_corollary_proof}.

\begin{remark}
\label{l(m,n)}
{\em
The $m=n=1$ case, which can only occur for $\Gamma_1$, is minimal here in the sense that $\ell_1(1,1)=3$ is the smallest number of $\CP^2s$ on which we obtain a self-dual metric by this technique.  Also, notice that for all $\ell>3$ there are multiple possibilities for $m$ and $n$ to obtain a self-dual metric on $\ell\#\CP^2$.  Since each such possibility limits to distinct orbifold metrics, the corresponding metrics on $\ell\#\CP^2$ are themselves distinct. For example, $\ell_1(1,2)\#\CP^2=\ell_2(2,1)\#\CP^2=4\#\CP^2$, but the orbifold limits respectively have singularities of types conjugate to $\phi(L(1,2)\times D^*_8)$ and $\mathfrak{I}^2_{2,1}$.
It is an interesting question whether these self-dual metrics lie in the same or distinct components of the moduli space of self-dual metrics on $\ell\#\CP^2$.
}
\end{remark}

\subsection{Acknowledgements}
The authors would like to express gratitude to Olivier Biquard, Nobuhiro Honda, and Claude LeBrun for many helpful discussions regarding self-dual geometry. 

\section{Background}
\subsection{Group actions and the Hopf fibration}
\label{SO(4)_groups}
It will be convenient to understand ${\rm SO}(4)$ in terms of quaternionic multiplication.  We identity $\CC^2$ with the space of quaternions $\mathbb{H}$ by
\begin{align}
(z_1,z_2)\in\CC^2\longleftrightarrow z_1+z_2\hat{j}\in \mathbb{H},
\end{align}
and consider $S^3$, in the natural way, as the space of unit quaternions.  It is well known that the map
$\phi:S^3\times S^3\rightarrow {\rm SO}(4)$ defined by
\begin{align}
\begin{split}
\label{phi}
\phi(a,b)(h)=a*h*\bar{b},
\end{split}
\end{align}
for $a,b\in S^3$ and $h\in \mathbb{H}$,
is a double cover.
Right multiplication by unit quaternions gives ${\rm SU}(2)$.  Notice that this is just the restriction $\phi|_{1\times S^3}$.  The finite subgroups of ${\rm SU}(2)$ were found in \cite{Coxeter_1940} and 
are given by the cyclic groups $L(-1,n)$, for all integers $n\geq 1$, and the binary polyhedral groups.
Similarly, the restriction
\begin{align}
\label{U(2)_double_cover}
\phi:S^1\times S^3\rightarrow {\rm U}(2),
\end{align}
where the $S^1$ is given by $e^{i\theta}$ for $0\leq \theta <2\pi$, is a double cover of ${\rm U}(2)$.  The finite subgroups of ${\rm U}(2)$ were classified in \cite{DuVal, Coxeter}.  Those which act freely on $S^3$ were later classified in \cite{Scott}.  These are the groups given above in Table \ref{groups}, we refer the reader to \cite{LVsmorgasbord} for an explicit list of generators and a more thorough exposition.

Here, we are interested in all finite subgroups of ${\rm SO}(4)$ which acts freely on $S^3$, not just those in ${\rm U}(2)$.  However, any such group is conjugate, in ${\rm SO}(4)$, to a subgroup of $\phi(S^1\times S^3)$ or $\phi(S^3\times S^1)$, and moreover, these groups themselves are conjugate subgroups of ${\rm O}(4)$, see \cite{Scott}.  Since $\phi(S^1\times S^3)={\rm U}(2)$, we call the subgroups of $\phi(S^3\times S^1)$ the {\textit{orientation reversed conjugate groups}}.  If $\Gamma\subset {\rm U}(2)$, we denote its orientation reversed counterpart by $\overline{\Gamma}\subset\phi(S^3\times S^1)\subset{\rm SO}(4)$.  If $\Gamma\subset {\rm U}(2)$ is finite, then it has a finite set of generators which can be written (not uniquely) as $\{\phi(a_i,b_i)\}_{i=1,\dots,n}$ for some $(a_i,b_i)\in S^1\times S^3$.
Observe that, up to conjugation in ${\rm SO}(4)$, the orientation reversed conjugate group $\overline{\Gamma}$, would be generated by switching the left and right multiplication in the generators, i.e. by the set $\{\phi(b_i,a_i)\}_{i=1,\dots,n}$.

A crucial step that underlies the results of this paper will be to consider quotients of certain orbifolds.  The resulting spaces will have new singular points and it will be essential to understand their orbifold groups.  
To do this, it will necessary to make use of the Hopf fibration.

Given the standard embedding $S^3\subset \CC^2$, and writing $S^2=\CC\cup \{\infty\}$, the Hopf map
$\mathcal{H}:S^3\rightarrow S^2$ is given by
\begin{align}
\label{Hopf_map}
\mathcal{H}(z_1,z_2)\mapsto z_1/z_2.
\end{align}
Observe that the Hopf fiber, over a general $z\in S^2$, is the $S^1$ given by
\begin{align}
\label{H_fiber}
e^{i\theta}(|z|^2+1)^{-1/2}(z,1)=e^{i\theta}(|z|^2+1)^{-1/2}(z+\hat{j})\in S^3.
\end{align} 
Using \eqref{H_fiber} to examine this fibration under quaternionic multiplication, one finds the following.  The Hopf fibration is preserved by all right multiplication, however it is only preserved under left multiplication by quaternions of the form $e^{i\theta}$ and $e^{i\theta}*\hat{j}$.  
Thus, from \eqref{U(2)_double_cover}, it is clear that all of ${\rm U}(2)$ preserves the Hopf fiber structure.  To find all other finite subgroups of ${\rm SO}(4)$ which act freely on $S^3$ and preserve the Hopf fibration, it is only necessary to examine those which are orientation reversed conjugate
to the finite subgroups of ${\rm U}(2)$ listed in Table \ref{groups}.  Since the orientation reversed conjugate groups are generated by switching left and right quaternionic multiplication of the generators of subgroups of $\U(2)$, it will be precisely those orientation reversed conjugate to subgroups of $\U(2)$ that have only elements of the form $e^{i\theta}$ or $e^{i\theta}*\hat{j}$ acting on the right which preserve the Hopf fibration.
In \cite{LVsmorgasbord}, the authors provided a list of these groups along with their generators, and referring to this it is clear that the only non-cyclic orientation reversed conjugate groups that preserve the Hopf fiber structure are those given in Table \ref{orientation_reversed_groups}.
\begin{table}[ht]
\caption{Groups preserving the Hopf fibration but not contained in ${\rm{U}}(2)$.}
\label{orientation_reversed_groups}
\begin{center}
\renewcommand\arraystretch{1.4}
\begin{tabular}{lll}
\hline
Non-cyclic $\overline{\Gamma}\subset {\rm SO}(4)$ & Conditions    &  Generators\\\hline\hline
$\overline{\phi(L(1,2m)\times D^*_{4n})}$  & $(m,2n)=1$       &$ \phi(1,e^{\frac{\pi i}{m}})$, $\phi(e^{\frac{\pi i}{n}},1)$, $ \phi(\hat{j},1)$\\
$ \overline{\mathfrak{I}^2_{m,n}}$ &  $(m,2)=2, (m,n)=1$      &$ \phi(1,e^{\frac{\pi i}{m}})$, $\phi(e^{\frac{\pi i}{n}},1)$, $\phi(\hat{j},e^{\frac{\pi i}{2m}})$\\
\hline
\end{tabular}
\end{center}
\end{table}

\subsection{Minimal resolutions and ALE metrics}
\label{minimal_resolutions}
For all finite subgroups $\Gamma\subset \U(2)$, which act freely on $S^3$, 
Brieskorn described the minimal resolution of $\CC^2/\Gamma$ complex analytically \cite{Brieskorn}.  
Here, we will first describe the minimal resolution of cyclic quotients, and then use this to to provide a description for all possible cases.

Let $\tilde{X}$ be the minimal resolution 
of $\CC^2/ L(q,p)$, where $1 \leq q < p$ are relative prime integers. The exceptional divisor of $\tilde{X}$, known as a Hirzebruch-Jung string, has intersection matrix:
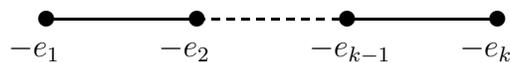
\begin{figure}[h]
\setlength{\unitlength}{2cm}
\begin{picture}(7,0.1)(-1,0)
\linethickness{.3mm}
\put(1,0){\line(1,0){1}}
\put(1,0){\circle*{.1}}
\put(.75,-.25){$-e_1$}
\put(2,0){\circle*{.1}}
\put(1.75, -.25){$-e_2$}
\multiput(2,0)(0.1,0){10}
{\line(1,0){0.05}}
\put(3,0){\circle*{.1}}
\put(2.75, -.25){$-e_{k-1}$}
\put(3,0){\line(1,0){1}}
\put(4,0){\circle*{.1}}
\put(3.75, -.25){$-e_k$}
\end{picture}
\vspace{5mm}
\label{cycfig}
\caption{Hirzebruch-Jung string.}
\end{figure}

\noindent
where the $e_i$ and $k$ are determined by the following Hirzebruch-Jung modified Euclidean algorithm:
\begin{align}
\begin{split}
\label{modified_EA}
p&=e_1q-a_1\\
q&=e_2a_1-a_2\\
&\hspace{2mm} \vdots \\
a_{k-3}&=e_{k-1}a_{k-2}-1\\
a_{k-2} &= e_k a_{k-1} = e_k,
\end{split}
\end{align}
where the self-intersecton numbers $e_i \geq 2$ and $0 \leq a_i < a_{i-1}$, see \cite{Hirzebruch1953}.  The integer $k$ is called the {\textit{length}} of the modified Euclidean algorithm.  (These values can also be understood in terms of a continued fraction expansion of $q/p$.) 

\begin{remark}{\em
We will often need to distinguish the length of the modified Euclidean algorithm for a particular pair of
relatively prime integers $1 \leq q <p$, and therefore we denote this by $k_{(q,p)}$ when necessary.
}
\end{remark}

For a non-cyclic finite subgroup $\Gamma\subset\U(2)$ which acts freely on $S^3$, the exceptional divisor of the minimal resolution of $\CC^2/\Gamma$ is a tree of rational curves with normal crossing singularities.  There is a distinguished rational curve which intersects exactly three other rational curves.  We refer to this as the \textit{central rational curve}, and it has self-intersection number
\begin{align}
\label{b_Gamma}
-b_{\Gamma}=-2-\frac{4m}{|\Gamma|}\Big[m-\Big(m\text{ mod } \frac{|\Gamma|}{4m}\Big)\Big],
\end{align}
where $m$ corresponds to that of the group as in Table \ref{groups}.  Emanating from the central rational curve are three Hirzebruch-Jung strings corresponding to the singularities $L(p_i-q_i,p_i)=\overline{L(q_i,p_i)}$, for $i=1,2,3$, where the $L(q_i,p_i)$ are the cyclic singularities of the orbifold quotients of Theorem \ref{quotient_theorem} specified for each group below.

In \cite{LVsmorgasbord}, the authors constructed scalar-flat K\"ahler ALE metrics on the minimal resolution of $\CC^2/\Gamma$ for all non-cyclic finite subgroups $\Gamma\subset{\rm U}(2)$ which act freely on $S^3$.   The previously known examples of such spaces are for non-cyclic finite subgroups $\Gamma\subset{\rm SU}(2)$ (these are the binary polyhedral groups), for which  Kronheimer obtained and classified hyperk\"ahler metrics on these minimal resolutions \cite{Kronheimer,Kronheimer2}.

LeBrun constructed a $\U(2)$-invariant scalar-flat K\"ahler ALE metric on the minimal resolution of $\CC^2/L(1,\ell)$ for all positive integers $\ell$ \cite{LeBrunnegative}.  The $\ell=2$ case is the well-known Eguchi-Hanson metric \cite{EguchiHanson}.  The minimal resolution here is the total space of the bundle $\mathcal{O}(-\ell)$ over $\CP^1$.  These are known as the LeBrun negative mass metrics and are denoted by $(\mathcal{O}(-\ell),g_{LB})$.

In \cite{CalderbankSinger}, Calderbank-Singer used the Joyce ansatz \cite{Joyce1995} to explicitly construct toric scalar-flat K\"ahler metrics on the minimal resolution of $\CC^2/L(q,p)$ for all relatively prime integers $1\leq q<p$.  When $q=1$ these are the LeBrun negative mass metrics, and when $q=p-1$ these are the toric multi-Eguchi-Hanson metrics (all monopole points lie on a common line), see \cite{GibbonsHawking}.

\subsection{Orbifold quotients}
\label{orbifold_quotients}

It will be essential to the work here to examine quotients of certain weighted projective spaces.  These are defined in general as follows.  For relatively prime integers $1 \leq r \leq q \leq p$, the \textit{weighted projective space} $\mathbb{CP}^2_{(r,q,p)}$ is $S^{5}/S^1$, 
where $S^1$ acts by
\begin{align*}
(z_0,z_1,z_2)\mapsto (e^{ir\theta}z_0,e^{iq\theta}z_1 ,e^{ip\theta}z_2),
\end{align*}
for $0\leq \theta <2\pi$.
Each weighted projective space is a complex orbifold which admits a unique Bochner-K\"ahler metric that we refer to as the \textit{canonical Bochner-K\"ahler metric}, see \cite{Bryant} for existence and \cite{DavidGauduchon} for uniqueness.  In real four-dimensions the Bochner-tensor is precisely the anti-self-dual part of the the Weyl tensor, so these metrics are self-dual K\"ahler.

Topologically, the conformal compactification of $\mathcal{O}(-n)$ is the weighted projective space $\CP^2_{(1,1,n)}$, which has a singularity of type $L(-1,n)$ at $[0,0,1]$, the point of compactification.  In \cite{DabkowskiLock}, the first author and Dabkowski proved an explicit K\"ahler conformal compactification of $\U(n)$-invariant K\"ahler ALE spaces, i.e. the conformal compactification of such spaces to K\"ahler orbifolds.  (It is important to note that the resulting spaces are K\"ahler with respect to reverse-oriented complex structures.)  Therefore, the LeBrun negative mass metric on $\mathcal{O}(-n)$ has a conformal compactification to a K\"ahler metric on $\widehat{\mathcal{O}}(-n)=\CP^2_{(1,1,n)}$.  Moreover,
 since the ALE metric is anti-self-dual the compactified metric is self-dual, so this is necessarily the canonical Bochner-K\"ahler metric on $\CP^2_{(1,1,n)}$.  Previously, in \cite{Joyce1991}, Joyce proved that there is a quaternionic metric on 
$\CP^2_{(1,1,n)}$ which must be conformal to $(\mathcal{O}(-n),g_{LB})$.  However, from
\cite{DabkowskiLock}, we see that the canonical Bochner-K\"ahler metric on $\CP^2_{(1,1,n)}$ is given explicitly by
\begin{align}
g_{BK}=\frac{dr^2}{(1+r^2)(1+nr^2)}+\frac{r^2}{r^2+1}\Big[\sigma_1^2+\sigma_2^2+\Big(\frac{1+nr^2}{r^2(1+r^2)}\Big)\sigma_3^2\Big],
\end{align}
where $\sigma_1,\sigma_2,\sigma_3$ is the usual left-invariant coframe on ${\rm SU}(2)$ and $r=0$ corresponds to $[0,0,1]$, the point of compactification.  Also, the rational curve defined by
\begin{align}
\Sigma:= \{[z_0,z_1,0]: z_0, z_1 \in \CC\} \subset \CP^2_{(1,1,n)}
\end{align}
will frequently be considered, so we make a point of distinguishing it here.  The corresponding rational curve in quotients of $\CP^2_{(1,1,n)}$ will be denoted by $\Sigma$ as well.

In \cite{LVsmorgasbord}, for each non-cyclic finite subgroup $\Gamma\subset{\rm U}(2)$ which acts freely on $S^3$, 
the authors took a specific quotient of a certain $(\mathcal{O}(-2m),g_{LB})$ to obtain a scalar-flat K\"ahler ALE orbifold with group at infinity $\Gamma$ and with all singularities isolated and of cyclic type.
The conformal compactification factor from \cite{DabkowskiLock} descends to compactify these quotients to self-dual K\"ahler orbifold quotients of $(\CP^2_{(1,1,2m)},g_{BK})$.  The following theorem is an immediate consequence of \cite[Theorem 4.1]{LVsmorgasbord}.
\begin{theorem}
\label{quotient_theorem}
For each non-cyclic finite subgroup $\Gamma\subset{\rm U}(2)$ which acts freely on $S^3$, the quotient
$(\CP^2_{(1,1,2m)},g_{BK})/\Gamma$ is a self-dual K\"ahler orbifold with four isolated singularities -- one at the point of compactification with orbifold group $\Gamma$, and three on the rational curve $\Sigma$ with orbifold groups specified precisely as follows.
{\em
\begin{align*}
\begin{array}{lllll}
\hline
\Gamma\subset {\rm U}(2)& \text{Conditions}  &  &\hspace{-3mm}\text{Orbifold}& \hspace{-3mm}\text{groups} \\\hline\hline
\phi(L(1,2m)\times D^*_{4n}) & (m,2n)=1 & L(1,2)& L(1,2)& L(m,n)\\  
\phi(L(1,2m)\times T^*)  & (m,6)=1& L(1,2) &  L(m,3) & L(m,3)\\ 
\phi(L(1,2m)\times O^*) & (m,6)=1 & L(1,2)& L(m,3) & L(m,4)\\
\phi(L(1,2m)\times I^*)  & (m,30)=1& L(1,2)& L(m,3) & L(m,5) \\
\text{$\mathfrak{I}^2_{m,n}$} &(m,2)=2,(m,n)=1& L(1,2) &L(1,2)&L(m,n) \\
 \text{$\mathfrak{I}^3_m$}& (m,6)=3 & L(1,2)& L(1,3) &L(2,3)\\\hline
\end{array}
\end{align*}
\em}
\end{theorem}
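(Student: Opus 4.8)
The plan is to derive Theorem~\ref{quotient_theorem} as a direct translation of the construction in \cite[Theorem 4.1]{LVsmorgasbord} into the conformally compactified picture, using the machinery already assembled in Section~\ref{orbifold_quotients}. Recall the setup: for each non-cyclic $\Gamma\subset{\rm U}(2)$ acting freely on $S^3$, \cite{LVsmorgasbord} realizes $\Gamma$ as the image under $\phi$ of a product $L(1,2m)\times G$ with $G$ a binary polyhedral group (or the index-$2$, index-$3$ diagonal subgroups), and exhibits a scalar-flat K\"ahler ALE orbifold with group at infinity $\Gamma$ as a finite quotient $(\mathcal{O}(-2m),g_{LB})/H$ for an appropriate group $H$ acting holomorphically and isometrically, having exactly three isolated fixed points on the zero section $\Sigma$, with local isotropy groups the cyclic groups $L(1,2)$, $L(m,3)$, $L(m,4)$, $L(m,5)$, $L(1,3)$, $L(2,3)$ as listed. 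First I would recall precisely which group $H$ is being quotiented by and why its action on $\mathcal{O}(-2m)$ has exactly these three orbifold points on $\Sigma$ and is free elsewhere; this is where the congruence conditions $(m,2n)=1$, $(m,6)=1$, etc.\ enter, guaranteeing freeness of the relevant subactions.

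Next I would invoke the $\U(2)$-invariant K\"ahler conformal compactification of \cite{DabkowskiLock}: the LeBrun metric $(\mathcal{O}(-2m),g_{LB})$ compactifies to the canonical Bochner-K\"ahler (hence self-dual K\"ahler) metric $g_{BK}$ on $\CP^2_{(1,1,2m)}$, adding a single orbifold point of type $L(-1,2m)$ at $[0,0,1]=\{r=0\}$, the point at infinity. The key observation is that the group $H$ used above is a subgroup of the residual symmetry of the compactified picture — it acts holomorphically and isometrically on $(\CP^2_{(1,1,2m)},g_{BK})$, preserving $\Sigma$ and fixing $[0,0,1]$ — so the compactification factor descends to the quotient and the conformal compactification of $(\mathcal{O}(-2m),g_{LB})/H$ is $(\CP^2_{(1,1,2m)},g_{BK})/H$. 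Since $g_{BK}$ is self-dual and anti-self-duality is conformally invariant, the quotient orbifold is self-dual; since the compactification of \cite{DabkowskiLock} is K\"ahler with respect to the reversed complex structure and $H$ acts holomorphically, the quotient is K\"ahler as well.

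It then remains to identify the four singular points of $(\CP^2_{(1,1,2m)},g_{BK})/H$ and their orbifold groups. Three of them are the images of the three fixed points on $\Sigma$, and by Remark~\ref{compactification_groups} (applied with orientation reversal, since we pass to the self-dual orientation) the orbifold group at each such point is the orientation-reversed conjugate of the cyclic isotropy group already computed downstairs in \cite{LVsmorgasbord} — but since $L(p-q,p)=\overline{L(q,p)}$ and the listed groups $L(1,2)$, $L(m,3)$, etc.\ are each conjugate to their own orientation reversal up to relabeling $q\mapsto p-q$, one obtains exactly the entries in the table; I would spell out this small bookkeeping for each row. The fourth singular point is $[0,0,1]$ itself: the orbifold group of the compactification of $(\mathcal{O}(-2m),g_{LB})/H$ at infinity is precisely the group at infinity of the ALE orbifold, namely $\Gamma$, again by Remark~\ref{compactification_groups}. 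So the four orbifold groups are $\Gamma$ at the point of compactification and the three listed cyclic groups on $\Sigma$, completing the proof.

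The main obstacle I anticipate is not conceptual but bookkeeping: one must verify carefully that the finite group $H$ from the ALE construction in \cite{LVsmorgasbord} genuinely extends to an isometric holomorphic action on $(\CP^2_{(1,1,2m)},g_{BK})$ compatible with the \cite{DabkowskiLock} compactification (so that "take quotient" and "compactify" commute), and that the three fixed-point isotropy computations on $\Sigma$ match the table row-by-row after the orientation-reversal/relabeling. Both are essentially already contained in \cite{LVsmorgasbord} and \cite{DabkowskiLock}, so the proof of this theorem is genuinely short — it is, as stated, "an immediate consequence" — and I would present it as such, citing \cite[Theorem 4.1]{LVsmorgasbord} for the ALE data and Section~\ref{orbifold_quotients} (i.e.\ \cite{DabkowskiLock} together with Remark~\ref{compactification_groups}) for the descent of the compactification.
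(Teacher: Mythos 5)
Your proposal takes essentially the same route as the paper: Theorem \ref{quotient_theorem} is stated there as an immediate consequence of \cite[Theorem 4.1]{LVsmorgasbord} together with the descent of the \cite{DabkowskiLock} conformal compactification to the quotient, which is exactly the argument you outline. The only content beyond those citations is the orientation bookkeeping --- the group $\Gamma$ at the compactification point via Remark \ref{compactification_groups}, and the singularities on $\Sigma$ versus their conjugates $L(p_i-q_i,p_i)$ --- which you correctly identify and which the paper handles implicitly.
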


\begin{remark}
{\em
We write the orbifold groups of the three singularities on the rational curve $\Sigma$ of the quotients
$(\CP^2_{(1,1,2m)},g_{BK})/\Gamma$ in Theorem \ref{quotient_theorem} as
$L(q_i,p_i)$, for $i=1,2,3$,
where $q_i$ is chosen modulo $p_i$ to satisfy $1\leq q_i< p_i$.
}
\end{remark}

\section{Eta invariants and Einstein metrics}
\label{remarks_on_eta}
In this section, we first derive the general formula for the 
$\eta$-invariant given in Theorem \ref{eta_theorem}, and then prove the non-existence 
result for Einstein metrics stated in Theorem \ref{Ricci_flat_thm}. 

\subsection{Proof of Theorem \ref{eta_theorem}}
\label{proof_eta_invariant_thm}
Recall that, up to conjugation in ${\rm SO}(4)$, the finite subgroups of ${\rm SO}(4)$ which act freely on $S^3$ are the finite subgroups of ${\rm U}(2)$, which act freely on $S^3$, and their orientation-reversed conjugates \cite{Scott}.  

We first discuss the $\eta$-invariant of cyclic groups.  For relatively prime integers $1\leq q<p$, the following formula is proved in \cite{AshiIshi,LockViaclovsky}:
\begin{align}
\label{etacyclic}
\eta(S^3/L(q,p))=\frac{1}{3}\Big(\sum_{i=1}^ke_i+\frac{q^{-1;p}+q}{p}\Big)-k,
\end{align}
where the $e_i$ and $k$ are as defined in \eqref{modified_EA}, and $q^{-1;p}$ denotes the inverse of $q\text{ mod } p$.

\begin{remark}
\label{eta_proof_rmk}
{\em
 Let $\Gamma\subset{\rm U}(2)$ be a finite subgroup which acts freely on $S^3$, and let $\overline{\Gamma}\subset{\rm SO}(4)$ denote its orientation reversed conjugate.  It is elementary that 
 $\eta(S^3/\Gamma)=-\eta(S^3/\overline{\Gamma})$.
Therefore, since all possible cyclic groups are orientation preserving conjugate to some $L(q,p)$, and the $\eta$-invariants of these are given by \eqref{etacyclic}, the theorem will follow from finding the $\eta$-invariant of all non-cyclic finite subgroups of ${\rm U}(2)$ which act freely on $S^3$.
}
\end{remark}

Since $\tau_{top}(\CP^2_{(1,1,2m)},g_{BK})=1$, from \eqref{tau} and \eqref{etacyclic}, observe that the orbifold signature is
\begin{align}
\tau_{orb}(\CP^2_{(1,1,2m)})&=\eta(1,2m)+1=\frac{2m^2+1}{3m}.
\end{align}
Let $\Gamma\subset{\rm U}(2)$ be a finite subgroup which acts freely on $S^3$ and consider
\begin{align}
\widetilde{\Gamma}=\Gamma/L(1,2m)\subset {\rm U}(2)/L(1,2m).
\end{align}
Clearly, $\widetilde{\Gamma}$ acts effectively on $\CP^2_{(1,1,2m)}$ and $\CP^2_{(1,1,2m)}/\widetilde{\Gamma}=\CP^2_{(1,1,2m)}/\Gamma$ is an orbifold with orbifold groups $\Gamma$ at the point of compactification and $L(q_i,p_i)$, for $i=1,2,3$, on the rational curve $\Sigma$ as
specified in Theorem \ref{quotient_theorem}.
From the orbifold signature theorem \eqref{tau}, since $\tau_{top}(\CP^2_{(1,1,2m)}/\widetilde{\Gamma})=1$ and $\tau_{orb}(\CP^2_{(1,1,2m)}/\widetilde{\Gamma})=\frac{1}{|\widetilde{\Gamma}|}\Big(\frac{2m^2+1}{3m}\Big)$, we find that
\begin{align}
\label{eta_general}
\eta(S^3/\Gamma)=\frac{1}{|\widetilde{\Gamma}|}\Big(\frac{2m^2+1}{3m}\Big)-1-\sum_{i=1}^3\eta(L(q_i,p_i)).
\end{align}
The proof is completed for each case by using the appropriate singularities as specified in Theorem \ref{quotient_theorem}, and then computing the corresponding cyclic eta-invariants for the particular congruences of $m$.  

For instance, when $\Gamma=\phi(L(1,2m)\times T^*)$, observe from Theorem \ref{quotient_theorem} that the singularities are of types $L(1,2), L(m,3), L(m,3)$.  Therefore
\begin{align*}
\eta(S^3/\phi(L(1,2m)\times T^*))=&\frac{1}{12}\Big(\frac{2m^2+1}{3m}\Big)-1-\eta(S^3/L(1,2))-2\eta(S^3/L(m,3))\\
=&\frac{1}{12}\Big(\frac{2m^2+1}{3m}\Big)-1\pm \frac{4}{9}\phantom{=}\text{for $m\equiv \pm 1\text{ mod }6$}.
\end{align*}
The idea for the other cases is identical and the computations, which follow similarly, are omitted.

\begin{remark}
{\em
The $\eta$-invariants of the binary polyhedral groups were known \cite{nakajima}.  However, their direct computation is arduous and here they are recovered simply.
}
\end{remark}
\subsection{Proof of Theorem \ref{Ricci_flat_thm}}
\label{proof_Ricci_flat_thm}
We will use Theorem \ref{eta_theorem} along with the following ALE analogue of the Hitchin-Thorpe inequality due to Nakajima.  Let $(M,g)$ be a Ricci-flat  ALE manifold with group at infinity $\Gamma$, then
\begin{align}
\label{ALE_HT}
2\Big(\chi_{top}(M)-\frac{1}{|\Gamma|}\Big)\geq 3|\tau_{top}(M)-\eta(S^3/\Gamma)|,
\end{align}
with equality if and only if $W^+$ or $W^-$ vanishes identically \cite{nakajima},
see also \cite{Hitchin}.

First, let $X$ be diffeomorphic to the minimal resolution of $\CC^2/\Gamma$ for some finite $\Gamma\subset{\rm U}(2)$ which acts freely on $S^3$.  For relatively prime integers $1\leq q<p$, let $k_{(q,p)}$ denote the length, and $e^{(q,p)}_j$ the coefficients, of the modified Euclidean algorithm \eqref{modified_EA} for $(q,p)$.  Then
\begin{align}
\begin{split}
\label{chi_tau_Ric_flat}
\chi_{top}(X)&=1-\tau_{top}(X)\\
\tau_{top}(X)&=\begin{cases}
\displaystyle-1-\sum_{i=1}^3k_{(p_i-q_i,p_i)}\phantom{=}&\text{$\Gamma$ non-cyclic}\\
\displaystyle-k_{(q,p)}\phantom{=}&\Gamma=L(q,p),
\end{cases}
\end{split}
\end{align}
where the $L(p_i-q_i,p_i)$, for $i=1,2,3$, are orientation reversed orbifold groups of those given for $\Gamma$ in Theorem \ref{quotient_theorem}.  These orbifold groups are orientation reversed conjugate to those on the quotient of weighted projective space appearing in the proof of Theorem \ref{eta_theorem}, so rewriting \eqref{eta_general} with respect to these groups here just reverses the sign of the $\eta$-invariants of the cyclic groups appearing in the sum.  

For non-cyclic $\Gamma$ there is the term $\sum_{i=1}^3k_{(p_i-q_i,p_i)}$ in both $\tau_{top}(X)$ and $\eta(S^3/\Gamma)$.  This follows from 
 \eqref{etacyclic}, \eqref{eta_general}, and \eqref{chi_tau_Ric_flat}.  Therefore, we compute that
\begin{align*}
\tau_{top}(X)-\eta(S^3/\Gamma)=&-\frac{1}{|\widetilde{\Gamma}|}\Big(\frac{2m^2+1}{3m}\Big)-\frac{1}{3}\sum_{i=1}^3\Big(\sum_{j=1}^{k_{(p_i-q_i,p_i)}}e^{(p_i-q_i,p_i)}_j\Big)\\
&-\frac{1}{3}\sum_{i=1}^3\frac{(p_i-q_i)^{-1;p_i}+(p_i-q_i)}{p_i}.
\end{align*}
Since each summand here is positive, rewrite \eqref{ALE_HT} as
\begin{align}
\begin{split}
\label{ALE_HT_min}
2\sum_{i=1}^3k_{(p_i-q_i,p_i)}+4-\frac{2}{|\Gamma|}\geq& \frac{1}{|\widetilde{\Gamma}|}\Big(\frac{2m^2+1}{m}\Big)+\sum_{i=1}^3\Big(\sum_{j=1}^{k_{(p_i-q_i,p_i)}}e^{(p_i-q_i,p_i)}_j\Big)\\
&+\sum_{i=1}^3\frac{(p_i-q_i)^{-1;p_i}+(p_i-q_i)}{p_i}.
\end{split}
\end{align}
Observe that for any non-cyclic $\Gamma\subset{\rm SU}(2)$ equality holds in \eqref{ALE_HT_min}, so any Ricci-flat metric in this case must also be anti-self-dual.  Therefore, these are the hyperk\"ahler ALE metrics classified by Kronheimer \cite{Kronheimer, Kronheimer2}.  Now, we will show that the inequality \eqref{ALE_HT_min} is violated for all non-cyclic $\Gamma\not\subset{\rm SU}(2)$.  Recall, from \eqref{modified_EA}, that all $e^{(p_i-q_i,p_i)}_j\geq 2$.  Since $\Gamma\not\subset{\rm SU}(2)$ is non-cyclic, for each $L(p_i-q_i,p_i)$ there is at least one $e^{(p_i-q_i,p_i)}_j>2$, so the problem reduces to proving that the inequality
\begin{align}
\label{ALE_HT_min_cyclic}
1-\frac{2}{|\Gamma|}\geq \frac{1}{|\widetilde{\Gamma}|}\Big(\frac{2m^2+1}{m}\Big)+\sum_{i=1}^3 \frac{(p_i-q_i)^{-1;p_i}+(p_i-q_i)}{p_i}
\end{align}
is violated.  Finally, observe that this is so since for all such groups at least one of the $L(p_i-q_i,p_i)=L(1,2)$ for which the term $\frac{(p_i-q_i)^{-1;p_i}+(p_i-q_i)}{p_i}=1$.

For $\Gamma=L(q,p)$ cyclic, \eqref{ALE_HT} reduces to
\begin{align}
2\Big(k_{(q,p)}+1-\frac{1}{p}\Big)\geq \sum_{j=1}^{k_{(q,p)}}e_j+\frac{q^{-1;p}+q}{p}.
\end{align}
Clearly, this inequality holds if and only if $\Gamma=L(-1,p)$, in which case it holds with equality, so any Ricci-flat metric in this case must also be anti-self-dual.  Therefore, these are the hyperk\"ahler Gibbons-Hawking multi-Eguchi-Hanson metrics.

Finally, let $X$ be diffeomorphic to the iterated blow-up of the minimal resolution of $\CC^2/\Gamma$.  
Then, here \eqref{ALE_HT} reduces to the inequalities \eqref{ALE_HT_min} and \eqref{ALE_HT_min_cyclic} with $2\ell$ and $3\ell$ added to the left and right hand sides of each respectively, where $\ell\geq 1$ is the number of blow-ups.  Given the previous arguments, it is easy to see that \eqref{ALE_HT} is always violated, and therefore no Ricci-flat metrics can exist.

\section{Self-dual deformations}
\label{SDD}
In Section \ref{proof_index_theorem} the proof of Theorem \ref{index_theorem} is given.  Then, using this  along with the previous work of the authors in \cite{LockViaclovsky}, we prove Theorem \ref{more_ASD_deformations} in Section \ref{pf_masdd}.

\subsection{Proof of Theorem \ref{index_theorem}}
\label{proof_index_theorem}
In \cite{LockViaclovsky}, the authors proved an index formula for the anti-self-dual deformation complex on an orbifold with isolated cyclic singularities, which is easily adjusted to find an index formula the self-dual deformation complex.
As an intermediate step to this, we showed that if $(M,g)$ is a compact self-dual orbifold with finitely many isolated singularities $\{p_1,\cdots, p_n\}$ having corresponding orbifold groups $\{\Gamma_1,\cdots, \Gamma_n\}$, where $\Gamma_i\subset {\rm SO}(4)$ is any finite subgroup which acts freely on $S^3$, then the index of the self-dual deformation complex can be expressed by a topological quantity and a correction term depending only on the $\Gamma_i$ as follows 
\begin{align}
\label{index_quantities}
Ind(M,g)=\frac{1}{2} ( 15 \chi(M) - 29 \tau(M))+\sum_{i=1}^nN(\Gamma_i).
\end{align} 
For $\Gamma=L(q,p)$ the authors proved that
\begin{align}
\label{N(L(q,p))}
N(L(q,p))=\begin{cases}
\displaystyle-\sum_{i=1}^k4e_i+12k-10\phantom{=}&\text{when $q>1$}\\
\displaystyle -4p+4\phantom{=}&\text{when $q= 1$},
\end{cases}
\end{align}
where $k$ and the $e_i$ are as in the modified Euclidean algorithm \eqref{modified_EA}.  

Recall that, up to conjugation in $\rm{SO}(4)$, the set of finite subgroups of $\rm{SO}(4)$ which act freely on $S^3$ are given by the finite subgroups of ${\rm U}(2)$ which act freely on $S^3$ and their orientation reversed conjugates \cite{Scott}.
From the following lemma, we see that it will be enough to find the correction terms for those subgroups of ${\rm U}(2)$.
\begin{lemma}
\label{SD_index_lemma}
Let $\Gamma\subset{\rm U}(2)$ be a non-cyclic finite subgroup which acts freely on $S^3$, and let $\overline{\Gamma}\subset{\rm SO}(4)$ denote its orientation reversed conjugate group.  Then, the self-dual index correction term for $\overline{\Gamma}$ is given in terms of that for $\Gamma$ by
\begin{align*}
N(\overline{\Gamma})=-N(\Gamma)-\begin{cases}
7\phantom{=}&\Gamma\not\subset{\rm SU}(2)\\
5\phantom{=}&\Gamma\subset{\rm SU}(2).
\end{cases}
\end{align*}
\end{lemma}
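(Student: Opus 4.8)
The plan is to relate the index correction term $N(\overline{\Gamma})$ to $N(\Gamma)$ by comparing two compact self-dual orbifolds, one with an orbifold point of type $\Gamma$ and one with an orbifold point of type $\overline{\Gamma}$, that are as close to each other topologically as possible. A natural candidate pair comes from the conformal compactification picture: take the scalar-flat K\"ahler ALE space $(Y, g_Y)$ on the minimal resolution of $\CC^2/\Gamma$ from \cite{LVsmorgasbord} and its self-dual orbifold compactification $(\widehat{Y}, \widehat{g}_Y)$, which has a single orbifold point of type $\Gamma$. Running the same construction for the anti-self-dual ALE space attached to $\overline{\Gamma}$ (or, more simply, using the orbifold obtained by reversing orientation) produces a self-dual orbifold with a single orbifold point of type $\overline{\Gamma}$. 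The key formula is \eqref{index_quantities}: for any such orbifold, $Ind = \tfrac12(15\chi - 29\tau) + N(\text{orbifold group})$. If I can exhibit two self-dual orbifolds $M$ and $\overline{M}$ whose indices and topological invariants I can compute or relate directly, the lemma will follow by subtracting the two instances of \eqref{index_quantities}.

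First I would set up the orientation-reversal bookkeeping. Reversing orientation sends $\chi_{top} \mapsto \chi_{top}$, $\tau_{top} \mapsto -\tau_{top}$, interchanges $W^+ \leftrightarrow W^-$ (so a self-dual metric stays self-dual only after we also flip back, per Remark \ref{compactification_groups}), and sends an orbifold group $\Gamma$ to $\overline{\Gamma}$. The self-dual deformation complex \eqref{thecomplex} is orientation-sensitive — $\mathcal{D}_g = (\mathcal{W}^-)_g'$ depends on the orientation — so reversing orientation exchanges the self-dual and anti-self-dual deformation complexes. Thus $Ind_{SD}(\overline{M}) = Ind_{ASD}(M)$, and I would need the analogous index formula for the anti-self-dual complex, whose topological part is $\tfrac12(15\chi - 29\tau)$ evaluated with the reversed $\tau$, i.e. $\tfrac12(15\chi + 29\tau)$, plus an anti-self-dual correction term. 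The authors' companion paper \cite{LockViaclovsky} supplies exactly the relation between the SD and ASD correction terms on orbifolds (for cyclic groups the ASD term is what \eqref{N(L(q,p))} computes before the easy adjustment mentioned in the text). Combining these, $N(\overline{\Gamma})$ gets expressed as $-N(\Gamma)$ plus a universal additive constant coming from the difference $\tfrac12(15\chi - 29\tau) - \tfrac12(15\chi + 29\tau) = -29\tau$ contributions together with the SD/ASD correction-term shift — a constant that depends only on whether $\Gamma$ lies in ${\rm SU}(2)$ or not, since that is what distinguishes the two classes of quotient singularities appearing in Table \ref{groups} and in \eqref{N(L(q,p))}.

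Concretely, the cleanest route is to verify the formula on the cyclic building blocks and then bootstrap. For $\Gamma = L(q,p)$ the formula $N(\overline{\Gamma}) = N(L(p-q,p))$ can be checked directly from \eqref{N(L(q,p))} by comparing the two Hirzebruch-Jung strings for $(q,p)$ and $(p-q,p)$: there is a classical identity relating $\sum e_i^{(q,p)}$, $k_{(q,p)}$ and the corresponding quantities for $(p-q,p)$ (the string for $(p-q,p)$ is the "dual" string, and $\sum e_i - k$ for the two are related by $\sum e_i^{(q,p)} + \sum e_i^{(p-q,p)} = k_{(q,p)} + k_{(p-q,p)} + \text{something linear in } p$). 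This pins down the constant in the cyclic case, and one sees it is $5$ when $q = -1$ (i.e. $\Gamma \subset {\rm SU}(2)$, noting $L(-1,p) = \overline{L(1,p)}$ up to the relevant conjugation) versus $7$ otherwise — matching the claimed split. For non-cyclic $\Gamma$, the correction term $N(\Gamma)$ is assembled (via Theorem \ref{index_theorem}, whose proof this lemma feeds into, so I must be careful about circularity) from the central curve self-intersection $-b_\Gamma$ and the three Hirzebruch-Jung strings $L(p_i - q_i, p_i)$; reversing orientation reverses the central curve's contribution and dualizes each of the three strings, so the non-cyclic identity reduces to three applications of the cyclic one plus the central-curve bookkeeping, and the three singletons $L(1,2)$ that always appear (Theorem \ref{quotient_theorem}) are exactly what produces the uniform constant.

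The main obstacle is avoiding circularity with Theorem \ref{index_theorem} and getting the additive constant exactly right: the constant is a delta between two "$\tfrac12(15\chi - 29\tau)$"-type expressions plus a delta between SD and ASD orbifold correction terms, and both deltas must be tracked through the orientation reversal without double-counting the topological change already absorbed into $\chi$ and $\tau$. I expect to resolve this by working entirely at the level of the correction terms $N$ — which by \eqref{index_quantities} are orientation-reversal covariant up to the explicit topological shift — rather than with full indices, and by using the cyclic base case from \eqref{N(L(q,p))} together with the Hirzebruch-Jung duality identity as the only genuine input, so that the lemma becomes a finite, case-by-case arithmetic check over the rows of Table \ref{groups} and Theorem \ref{quotient_theorem}.
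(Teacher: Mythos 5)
Your proposal never actually produces a handle on $N(\overline{\Gamma})$, and the one concrete mechanism you offer gives the wrong constants. The additive constant in a relation of the form $N(\overline{\Gamma})=-N(\Gamma)-c$ is not universal arithmetic that can be read off from Hirzebruch--Jung duality: it encodes the cohomology of the self-dual deformation complex of some model orbifold containing both singularity types, and this depends on the group. Indeed, the cyclic analogues recorded in the paper are $N(L(1,p))=-N(L(-1,p))-10$ and $N(L(q,p))=-N(L(p-q,p))-12$ for $1<q<p-1$, so your claim that the cyclic base case "pins down" the constants $5$ and $7$ is simply false -- the cyclic constants are $10$ and $12$, precisely because the relevant $\dim(H^0_{SD})$ (and, for cyclic footballs, $H^1_{SD}$) differs from the non-cyclic case. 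Your bootstrap from cyclic to non-cyclic is also unfounded: $\CC^2/\overline{\Gamma}$ has no complex structure, no minimal resolution, no central curve and no strings, and even for $\Gamma\subset{\rm U}(2)$ the expression $N(\Gamma)=-4b_{\Gamma}+\mathcal{B}_{\Gamma}$ is a \emph{consequence} of Theorem \ref{index_theorem} (proved by a global index computation on $\CP^2_{(1,1,2m)}/\Gamma$), not a local additivity of $N$ along the exceptional divisor that could be "dualized" term by term. Similarly, the SD/ASD correction-term relation you want to import from \cite{LockViaclovsky} exists there only for cyclic groups; for non-cyclic groups that relation is essentially the statement being proved, so this route is circular.

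The missing idea is to evaluate the index formula \eqref{index_quantities} on a single explicit self-dual orbifold that contains both a $\Gamma$-point and a $\overline{\Gamma}$-point and whose deformation cohomology is known. The paper takes the round metric on $S^4/\Gamma$, with $\Gamma$ acting around the $x_5$-axis: this has exactly two singular points, of types $\Gamma$ and $\overline{\Gamma}$, with $\chi_{top}=2$ and $\tau_{top}=0$. For non-cyclic $\Gamma$ one has $H^1_{SD}=H^2_{SD}=0$, and $\dim(H^0_{SD})$ equals $1$ if $\Gamma\not\subset{\rm SU}(2)$ and $3$ if $\Gamma\subset{\rm SU}(2)$ (the conformal Killing fields commuting with $\Gamma$, from \cite{MCC}). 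Plugging into \eqref{index_quantities} gives $8+N(\Gamma)+N(\overline{\Gamma})=1$ or $2$, from which the lemma follows at once; note this argument needs no knowledge of the individual values $N(\Gamma)$, which is how the circularity you worried about is avoided.
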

\begin{proof}
Consider the quotient $(S^4,g)/\Gamma$, where $g$ is the standard round metric and $\Gamma$ acts around the $x_5$-axis.  This is a self-dual orbifold with two singularities, one of type $\Gamma$ and the other of type $\overline{\Gamma}$.  It is well known that both $H^1_{SD}$ and $H^2_{SD}$ of the self-dual deformation complex vanish in this case, thus the index is given by $\dim(H^0_{SD})$.  From \cite{MCC}, we have that $\dim(H^0_{SD})=1$ if $\Gamma\not\subset{\rm SU}(2)$ and not cyclic, and $\dim(H^0_{SD})=3$ if $\Gamma\subset{\rm SU}(2)$ and not cyclic.  Equating this with the index obtained from \eqref{index_quantities}, since $\chi_{top}(S^4/\Gamma)=2$ and $\tau_{top}(S^4/\Gamma)=0$, we find that
\begin{align}
8+N(\Gamma)+N(\overline{\Gamma})=\begin{cases}
1\phantom{=}&\Gamma\not\subset{\rm SU}(2)\\
2\phantom{=}&\Gamma\subset{\rm SU}(2),
\end{cases}
\end{align}
from which we can solve for $N(\overline{\Gamma})$ to complete the proof.
\end{proof}

Therefore, to complete the proof, it is only necessary to find the correction term for finite subgroups of ${\rm U}(2)$ which act freely on $S^3$, and the plan for the remainder of the proof is as follows.  
For any finite subgroup $\Gamma\subset {\rm U}(2)$ which acts freely on $S^3$, recall the orbifold quotients $(\CP^2_{(1,1,2m)},g_{BK})/\Gamma$ from Theorem \ref{quotient_theorem}.
In \cite{HondaOn}, Honda discovered the explicit form of the ${\rm U}(2)$-action on $H^1_{SD}$ of the self-dual deformation complex of $(\CP^2_{(1,1,2m)},g_{BK})$.  Applying a representation theoretic argument to this, we find the dimension of the space of invariant elements of $H^1_{SD}$ under the quotient by $\Gamma$.  Since finding $\dim(H^1_{SD})$ of the self-dual deformation complex on the quotient $(\CP^2_{(1,1,2m)},g_{BK})/\Gamma$ is equivalent to finding the dimension of space of invariant elements of $H^1_{SD}$ on $(\CP^2_{(1,1,2m)},g_{BK})$ under the action of $\Gamma$, we then use this to recover the index.  Finally we solve for $N(\Gamma)$ in terms of 
the index, which at this point is known, and the correction terms of the cyclic quotient singularities, which are known from \eqref{N(L(q,p))}, that arise in the quotient.

In the following proposition we find the dimension of $H^1_{SD}$ of $(\CP^2_{(1,1,2m)},g_{BK})/\Gamma$, which we are able to give simply in terms of $b_{\Gamma}$ (the negative of the self-intersection number of the central rational curve).

\begin{proposition}
\label{invariant_proposition}
Let $\Gamma\subset{\rm U}(2)$ be a non-cyclic finite subgroup which acts freely on $S^3$.  Then, the dimension of the space of the first cohomology group of the self-dual
deformation complex on $(\CP^2_{(1,1,2m)},g_{BK})/\Gamma$ is given by
\begin{align*}
\dim(H^1_{SD,\Gamma})=4b_{\Gamma}-\mathcal{C}_{\Gamma}=\frac{16m}{|\Gamma|}\Big[m-\Big(m\text{ {\em mod} }\frac{|\Gamma|}{4m}\Big)\Big]+8-\mathcal{C}_{\Gamma},
\end{align*}
where $\mathcal{C}_{\Gamma}$ is a constant given by
{\em
\begin{align*}
\renewcommand\arraystretch{1.4}
\begin{array}{|l|c|l|}
\hline
\Gamma\subset {\rm U}(2) &\phantom{=}  \mathcal{C}_{\Gamma} \phantom{=}& \text{Congruences} \\\hline\hline
\multirow{2}{*}{$\phi(L(1,2m)\times D^*_{4n}),\phantom{i}\mathfrak{I}^2_{m,n}$}& 6 & m\not\equiv 1\text{ mod } |\Gamma|/(4m) \\
& 8&m\equiv 1\text{ mod } |\Gamma|/(4m) \\\hline
\multirow{3}{*}{$\phi(L(1,2m)\times T^*)$, $\phi(L(1,2m)\times O^*)$, $\mathfrak{I}^3_m$} &4& m\equiv -1\text{ mod }|\Gamma|/(4m)\\	
						&6& m\not\equiv \pm1\text{ mod }|\Gamma|/(4m)\\	
						&8& m\equiv 1\text{ mod }|\Gamma|/(4m) \\\hline	
\multirow{3}{*}{$\phi(L(1,2m)\times I^*)$} &4& m\equiv 17, 23, 29\text{ mod }30 \\
						& 6& m\equiv 7, 11, 13, 19\text{ mod }30 \\
						&8& m\equiv 1\text{ mod }30 \\\hline												\end{array}
\end{align*}
}
\end{proposition}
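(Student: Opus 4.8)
The plan is to compute $\dim(H^1_{SD,\Gamma})$ as the dimension of the $\Gamma$-fixed subspace of the first cohomology group $H^1_{SD}$ of the self-dual deformation complex \eqref{thecomplex} of $(\CP^2_{(1,1,2m)},g_{BK})$, using the explicit $\rm U(2)$-module structure on $H^1_{SD}$ found by Honda \cite{HondaOn} together with the character formula for invariants of a finite group. First one records the reductions that make this possible. The scalars $L(1,2m)\subset\rm U(2)$ act trivially on the weighted projective space $\CP^2_{(1,1,2m)}$, each $\Gamma$ occurring in Table~\ref{groups} meets the scalars of $\rm U(2)$ exactly in $L(1,2m)$, and $\Gamma$ therefore acts through $\widetilde\Gamma=\Gamma/L(1,2m)$, which is isomorphic to the image of $\Gamma$ in $\rm PU(2)\cong\rm SO(3)$; explicitly $\widetilde\Gamma$ is the dihedral group of order $2n$ for the families $\phi(L(1,2m)\times D^*_{4n})$ and $\mathfrak I^2_{m,n}$, and is $A_4$, $S_4$, $A_5$, $A_4$ for $\phi(L(1,2m)\times T^*)$, $\phi(L(1,2m)\times O^*)$, $\phi(L(1,2m)\times I^*)$, $\mathfrak I^3_m$ respectively. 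Since the differential operators of \eqref{thecomplex} are natural and $\CP^2_{(1,1,2m)}\to\CP^2_{(1,1,2m)}/\widetilde\Gamma$ is an orbifold covering away from the isolated singular points of Theorem~\ref{quotient_theorem}, the cohomology of the self-dual deformation complex of the quotient is the $\widetilde\Gamma$-invariant part of $H^\bullet_{SD}$; in particular
\[
\dim(H^1_{SD,\Gamma})=\dim\big((H^1_{SD})^{\widetilde\Gamma}\big)=\frac{1}{|\widetilde\Gamma|}\sum_{\gamma\in\widetilde\Gamma}\operatorname{tr}\big(\gamma\mid H^1_{SD}\big).
\]

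Next I would substitute Honda's description. His theorem presents $H^1_{SD}(\CP^2_{(1,1,2m)},g_{BK})$ explicitly as a direct sum of irreducible $\rm U(2)$-representations whose highest weights depend affine-linearly on $m$, so that its total dimension is a linear function of $m$; the $\gamma=1$ term above is then this dimension divided by $|\widetilde\Gamma|$. For each remaining summand, counting the multiplicity of the trivial representation in its restriction to $\widetilde\Gamma$ is a finite piece of representation theory: the one-dimensional pieces are immediate, while for the symmetric-power pieces it amounts to reading off a coefficient of the appropriate Molien series — that of the corresponding binary polyhedral or dihedral group — which is a rational function whose denominator is governed by the degrees of a generating set of invariants of that group. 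Consequently the whole sum is a quasi-polynomial in $m$ of period dividing $|\Gamma|/(4m)$, equal to $n$ for the dihedral families and to $6$, $12$, $30$, $6$ in the remaining cases.

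The identification with $4b_\Gamma-\mathcal C_\Gamma$ is then a matter of separating the growing and periodic parts. By \eqref{b_Gamma} one has $4b_\Gamma=\tfrac{16m}{|\Gamma|}\big[m-(m\bmod\tfrac{|\Gamma|}{4m})\big]+8$, which is exactly the quasi-polynomial with the correct quasi-linear growth $4\lfloor m/(|\Gamma|/4m)\rfloor$ and no residual constant, and the difference $8-\mathcal C_\Gamma\in\{0,2,4\}$ is the residue-dependent correction. Since the arithmetic conditions of Table~\ref{groups} restrict $m$ to finitely many residue classes modulo $|\Gamma|/(4m)$ in each family, it suffices to evaluate this correction on one representative of each occurring class; this produces precisely the three-column table in the statement. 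For the binary dihedral–type families the correction is constant on the whole allowed range apart from the split $m\equiv1$ versus $m\not\equiv1\bmod n$, which is why only two values appear there, while for the tetrahedral, octahedral and icosahedral families the several admissible residues must be treated individually.

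The main obstacle is the case-by-case computation in the last two steps. One must (i) pin down Honda's decomposition and confirm that its conventions are compatible with the self-dual, rather than anti-self-dual, orientation fixed in Remark~\ref{compactification_groups}; (ii) for each of the finite groups $\widetilde\Gamma$, enumerate conjugacy classes, lift representatives to $\rm U(2)$ while tracking the scalar ambiguity — an element of $\widetilde\Gamma$ determines a pair of eigenvalues in $\rm U(2)$ only up to the $2m$-th roots of unity that were divided out — and evaluate the characters of Honda's irreducible summands on them; and (iii) carry out the Molien-series bookkeeping for the four relevant invariant theories and collect the residue-dependent constants. The diagonal subgroups require the preliminary check that $\mathfrak I^2_{m,n}$ and $\mathfrak I^3_m$ meet the scalars exactly in $L(1,2m)$ and hence have $\widetilde\Gamma$ dihedral of order $2n$ and $A_4$ respectively, which is why $\mathfrak I^2_{m,n}$ inherits the dihedral-type answer and $\mathfrak I^3_m$, whose $m$ lies in the single class $m\equiv3\bmod6$, contributes $\mathcal C_\Gamma=6$. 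With Honda's module in hand each of these is routine, and the period statement bounds the verification to finitely many values of $m$ per family.
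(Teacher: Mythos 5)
Your overall strategy is the same as the paper's: identify $H^1_{SD}$ of the quotient with the invariant subspace of Honda's $\U(2)$-module on $(\CP^2_{(1,1,2m)},g_{BK})$ and count invariants by averaging characters over the (effective) finite group. Where you diverge is in how the averages are evaluated. The paper computes them in closed form for all $m$ at once: for the product groups it replaces $\Gamma$ by the binary polyhedral subgroup $\Gamma'\subset\Gamma$ with the same effective action (which, incidentally, sidesteps the scalar-lift ambiguity you worry about when working with $\widetilde\Gamma\subset{\rm PU}(2)$), writes each character as $2\sin((2m-2)\theta_\gamma)\cot(\theta_\gamma)$, and sums over explicit eigenvalue decompositions using Eisenstein's cotangent--sawtooth identity \eqref{Eisenstein_identity}; the diagonal groups $\mathfrak{I}^2_{m,n}$, $\mathfrak{I}^3_m$ are handled by a direct eigenvalue decomposition of the whole group. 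This yields the $4\lfloor m/(|\Gamma|/4m)\rfloor$ growth and the constants $\mathcal{C}_\Gamma$ simultaneously, and exhibits the $4b_\Gamma$ structure directly. Your route instead invokes quasi-polynomiality in $m$ (via twisted Molien series for the relevant invariant theories) with period dividing $|\Gamma|/(4m)$, plus the observation that the non-identity character contributions are bounded, to reduce the determination of $\mathcal{C}_\Gamma$ to finitely many evaluations per residue class; this is a valid and somewhat softer argument, but it postpones exactly the case-by-case, residue-by-residue evaluations that constitute the substance of the proposition, and the claim that the floor part is exactly $4b_\Gamma$ with no extra constant only emerges after those checks. Two small points to tighten: Honda's module is $\rho\oplus\bar\rho$ with $\rho=\big(S^{2m-2}(\CC^2)\otimes\det\big)\oplus\big(S^{2m-4}(\CC^2)\otimes\det^2\big)$, so there are no one-dimensional summands in general (your ``one-dimensional pieces are immediate'' suggests the decomposition wasn't pinned down), and the $m=1$ case (Eguchi--Hanson, $\dim H^1_{SD}=0$) should be treated separately as the paper does.
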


\begin{proof}
The space $(\CP^2_{(1,1,2)},g_{BK})$ is the K\"ahler conformal compactification of the Eguchi-Hanson metric on $\mathcal{O}(-2)$ for which it is well known that $\dim(H^1_{SD})=0$.  This is the $m=1$ case.
For $m>1$,  Honda showed that the complexification of $H^1_{SD}$ of the self-dual deformation complex on $(\CP^2_{(1,1,2m)},g_{BK})$ is equivalent to
\begin{align}
\rho\oplus\bar{\rho},\phantom{=}\text{where}\phantom{=}\rho=\big(S^{2m-2}(\CC^2)\otimes \det\big)\oplus \big(S^{2m-4}(\CC^2)\otimes \det{}^2\big),
\end{align}
as a representation space of ${\rm U}(2)$, see \cite{HondaOn}.
The dimension of the space of invariant elements of $H^1_{SD}$ under the action of $\Gamma$ is equal to that under the action of any subgroup $\Gamma'\subset\Gamma$ that has the same effective action as $\Gamma$ and is given by
\begin{align}
\label{character_sum}
\dim(H^1_{SD,\Gamma})=\dim(H^1_{SD,\Gamma'})=\frac{1}{|\Gamma'|}\sum_{\gamma\in\Gamma'}\Big(\chi_{\rho}(\gamma)+\chi_{\bar{\rho}}(\gamma)\Big)=\frac{2}{|\Gamma'|}\sum_{\gamma\in\Gamma'}\chi_{\rho}(\gamma),
\end{align}
where $\chi_{\rho}(\gamma)$ and $\chi_{\bar{\rho}}(\gamma)$ denote the characters of $\gamma$ with respect to each representation.

Since the eigenvalues of any $\gamma\in{\rm U}(2)$ can be written as $\{z_1=e^{i\theta_1},z_2=e^{i\theta_2}\}$ for some $0\leq \theta_1,\theta_2<2\pi$, observe that
\begin{align}
\label{character}
\chi_{\rho}(\gamma)=(z_1z_2)\sum_{p=0}^{2m-2}z_1^{2m-2-p}z_2^p+(z_1z_2)^2\sum_{p=0}^{2m-4}z_1^{2m-4-p}z_2^p.
\end{align}
Clearly, $\dim(H^1_{SD})=2\chi_{\rho}(\pm Id)=8m-8$. 

In order to compute \eqref{character_sum}, we introduce a number theoretic function and identity.
\begin{itemize}
\item For $x\in\RR$, the sawtooth function is defined as
\begin{align}
((x))=\begin{cases}
x-\lfloor x\rfloor -\frac{1}{2}\phantom{=}&\text{when $x\not\in\mathbb{Z}$}\\
0\phantom{=}&\text{when $x\in\mathbb{Z}$},
\end{cases}
\end{align}
where $\lfloor x \rfloor$ denotes the greatest integer less than $x$. 
\item For $n,k$, with $k\geq 1$, the following identity is due to Eisenstein, see \cite{Apostol}:
\begin{align}
\label{Eisenstein_identity}
\sum_{j=1}^{n-1}\sin\Big(\frac{2\pi k}{n}j\Big)\cot\Big(\frac{\pi}{n}j\Big)=-2n\Big(\Big(\frac{k}{n}\Big)\Big).
\end{align}
\end{itemize}

We first consider those groups $\Gamma$ that are the image under $\phi$ of the product of $L(1,2m)$ and a binary polyhedral group, and
let $\Gamma'\subset\Gamma$ be the subgroup that is the binary polyhedral group itself.  All elements 
$\gamma\in \Gamma'$ have eigenvalues of the form $\{z=e^{i\theta_{\gamma}},z^{-1}=e^{-i\theta_{\gamma}}\}$ since each $\Gamma'\subset{\rm SU}(2)$.  Thus,  for $\gamma\neq \pm 1$, the character \eqref{character} reduces to
\begin{align}
\begin{split}
\label{character_sum_II}
\chi_{\rho}(\gamma)&=\sum_{p=0}^{2m-2}z^{2m-2-2p}+\sum_{p=0}^{2m-4}z^{2m-4-2p}=2\sin((2m-2)\theta_{\gamma})\cot(\theta_{\gamma}),
\end{split}
\end{align}
and therefore
\begin{align}
\label{invariant_elements_SU(2)}
\dim(H^1_{SD,\Gamma})=\dim(H^1_{SD,\Gamma'})=\frac{4}{|\Gamma'|}\Big[(4m-4)+\sum_{\gamma\neq \pm Id\in \Gamma'}\sin((2m-2)\theta_{\gamma})\cot(\theta_{\gamma})\Big].
\end{align}
Now, $\dim(H^1_{SD,\Gamma'})$ will be found for each binary polyhedral group separately.  We provide the eigenvalue decomposition of each group in the form of a list of sets of eigenvalues along with multiplicities, where the multiplicity of a particular set is the number of times the eigenvalues of that set appear in the set of all eigenvalues of the group.  Grouping the eigenvalues as such will allow us to use \eqref{Eisenstein_identity} to compute \eqref{invariant_elements_SU(2)}.  The eigenvalue decompositions of the binary polyhedral groups are found easily by examining their well-known decomposition into conjugacy classes, see \cite{Stekolshchik}.
\\

\noindent
{$\boldsymbol {\Gamma'=D^{\ast}_{4n}:}$} 
The eigenvalue decomposition of the binary dihedral group is given by:
\begin{align*}
\begin{array}{l|c}
\hline
\text{Set}&\text{Multiplicity}\\\hline\hline
S_1=\Big\{ \{e^{\frac{\pi i k}{n}},e^{-\frac{\pi i k}{n}}\}\Big\}_{1\leq k\leq 2n}& 1\\
S_2=\Big\{\{i,-i\}\Big\}& 2n
\end{array}
\end{align*}
Summing the characters \eqref{character_sum_II} of the elements of $S_1$ that are not $\pm Id$ gives two copies of the sum \eqref{Eisenstein_identity} where $k=m-1$.
The element of $S_2$ does not contribute to the sum in \eqref{invariant_elements_SU(2)} since $((1/2))=0$.    Therefore
\begin{align*}
\begin{split}
\dim(H^1_{SD,D^*_{4n}})&=\frac{1}{n}\Big[(4m-4)-4n\Big(\Big(\frac{m-1}{n}\Big)\Big)\Big]\\
&=\begin{cases}
4\lfloor\frac{m-1}{n}\rfloor+2=4b_{\phi(L(1,2m)\times D^*_{4n})}-6\phantom{=}&m\not\equiv 1\text{ mod }n\\
\frac{4(m-1)}{n}=4b_{\phi(L(1,2m)\times D^*_{4n})}-8\phantom{=}&m\equiv 1\text{ mod }n.
\end{cases}
\end{split}
\end{align*}

\noindent
{$\boldsymbol {\Gamma'=T^*:}$}
The eigenvalue decomposition of the binary tetrahedral group is given by:
\begin{align*}
\begin{array}{l|c}
\hline
\text{Set}&\text{Multiplicity}\\\hline\hline
S_1=\Big\{ \{1,1\}, \{-1,-1\}\Big\}&1\\
S_2=\Big\{ \{e^{\frac{\pi i}{3}},e^{-\frac{\pi i}{3}}\}, \{e^{\frac{2\pi i}{3}},e^{-\frac{2\pi i}{3}}\}\Big\}&8\\
S_3=\Big\{ \{i,-i\}\Big\}&6
\end{array}
\end{align*}
Use \eqref{Eisenstein_identity} to sum the characters of the elements of $S_2$ as given by \eqref{character_sum_II}.  The elements of $S_3$ do not contribute to the sum in \eqref{invariant_elements_SU(2)} since $((1/2))=0$.  Therefore, adjusting these sums according to the appropriate multiplicities, we find that
\begin{align*}
\begin{split}
\dim(H^1_{SD,T^*})&=\frac{1}{6}\Big[(4m-4)-48\Big(\Big(\frac{m-1}{3}\Big)\Big)\Big]\\
&=\begin{cases}
\frac{4(m+1)}{6}=4b_{\phi(L(1,2m)\times T^*)}-4\phantom{=}&m\equiv 5\text{ mod }6\\
\frac{4(m-1)}{6}=4b_{\phi(L(1,2m)\times T^*)}-8\phantom{=}&m\equiv 1\text{ mod }6.
\end{cases}
\end{split}
\end{align*}

\noindent
{$\boldsymbol {\Gamma'=O^*:}$} 
The eigenvalue decomposition of the binary octahedral group is given by:
\begin{align*}
\begin{array}{l|c}
\hline
\text{Set}&\text{Multiplicity}\\\hline\hline
S_1=\Big\{ \{1,1\}, \{-1,-1\}\Big\}&1\\
S_2=\Big\{ \{e^{\frac{\pi i}{3}},e^{-\frac{\pi i}{3}}\}, \{e^{\frac{2\pi i}{3}},e^{-\frac{2\pi i}{3}}\}\Big\}&8\\
S_3=\Big\{ \{e^{\frac{\pi i}{4}},e^{-\frac{\pi i}{4}}\}, \{i,-i\}, \{e^{\frac{3\pi i}{4}},e^{-\frac{3\pi i}{4}}\}\Big\}& 6\\
S_3=\Big\{ \{i,-i\}\Big\}&12
\end{array}
\end{align*}
Use \eqref{Eisenstein_identity} to sum the characters of the elements of $S_2$ and $S_3$ as given by \eqref{character_sum_II}.  The elements of $S_4$ do not contribute to the sum in \eqref{invariant_elements_SU(2)} since $((1/2))=0$.  Therefore, adjusting these sums according to the appropriate multiplicities, we find that
 \begin{align*}
 \begin{split}
 \dim(H^1_{SD,O^*})&=\frac{1}{12}\Big[(4m-4)-48\Big(\Big(\frac{m-1}{3}\Big)\Big)-48\Big(\Big(\frac{m-1}{4}\Big)\Big)\Big]\\
&=\begin{cases}
\frac{m+1}{3}=b_{\phi(L(1,2m)\times O^*)}-4\phantom{=}&m\equiv 11\text{ mod }12\\
\frac{m-1}{3}=b_{\phi(L(1,2m)\times O^*)}-6\phantom{=}&m\equiv 7\text{ mod }12\\
\frac{m+1}{3}=b_{\phi(L(1,2m)\times O^*)}-6\phantom{=}&m\equiv 5\text{ mod }12\\
\frac{m-1}{3}=b_{\phi(L(1,2m)\times O^*)}-8\phantom{=}&m\equiv 1\text{ mod }12.
\end{cases}
\end{split}
\end{align*}

\noindent
{$\boldsymbol {\Gamma'=I^*:}$} 
The eigenvalue decomposition of the binary octahedral group is given by:
\begin{align*}
\begin{array}{l|c}
\hline
\text{Set}&\text{Multiplicity}\\\hline\hline
S_1=\Big\{ \{1,1\}, \{-1,-1\}\Big\}&1\\
S_2=\Big\{ \{e^{\frac{\pi i}{3}},e^{-\frac{\pi i}{3}}\}, \{e^{\frac{2\pi i}{3}},e^{-\frac{2\pi i}{3}}\}\Big\}&20\\
S_3=\Big\{ \{e^{\frac{\pi i}{5}},e^{-\frac{\pi i}{5}}\}, \{e^{\frac{2\pi i}{5}},e^{-\frac{2\pi i}{5}}\},\{e^{\frac{3\pi i}{5}},e^{-\frac{3\pi i}{5}}\}, \{e^{\frac{4\pi i}{5}},e^{-\frac{4\pi i}{5}}\}\Big\}& 12\\
S_3=\Big\{ \{i,-i\}\Big\}&30
\end{array}
\end{align*}
Use \eqref{Eisenstein_identity} to sum the characters of the elements of $S_2$ and $S_3$ as given by \eqref{character_sum_II}.  The elements of $S_4$ do not contribute to the sum in \eqref{invariant_elements_SU(2)} since $((1/2))=0$.  Therefore, adjusting these sums according to the appropriate multiplicities, we find that
\begin{align*}
\begin{split}
\dim(H^1_{SD,I^*})&=\frac{1}{30}\Big[(4m-4)-120\Big(\Big(\frac{m-1}{3}\Big)\Big)-120\Big(\Big(\frac{m-1}{5}\Big)\Big)\Big]\\
&=\begin{cases}
\frac{2(m+1)}{15}=4b_{\phi(L(1,2m)\times I^*)}-4\phantom{=}&m\equiv 29\text{ mod }30\\
\frac{2(m+7)}{15}=4b_{\phi(L(1,2m)\times I^*)}-4\phantom{=}&m\equiv 23\text{ mod }30\\
\frac{2(m+13)}{15}=4b_{\phi(L(1,2m)\times I^*)}-4\phantom{=}&m\equiv 17\text{ mod }30\\
\frac{2(m-4)}{15}=4b_{\phi(L(1,2m)\times I^*)}-6\phantom{=}&m\equiv 19\text{ mod }30\\
\frac{2(m+2)}{15}=4b_{\phi(L(1,2m)\times I^*)}-6\phantom{=}&m\equiv 13\text{ mod }30\\
\frac{2(m+8)}{15}=4b_{\phi(L(1,2m)\times I^*)}-6\phantom{=}&m\equiv 7\text{ mod }30\\
\frac{2(m+4)}{15}=4b_{\phi(L(1,2m)\times I^*)}-6\phantom{=}&m\equiv 11\text{ mod }30\\
\frac{2(m-1)}{15}=4b_{\phi(L(1,2m)\times I^*)}-8\phantom{=}&m\equiv 1\text{ mod }30.
\end{cases}
\end{split}
\end{align*}

For the $\mathfrak{I}^2_{m,n}$ and $\mathfrak{I}^3_m$ cases, we will compute \eqref{character_sum} over the entire group $\Gamma$ since there is not a clear subgroup to play the role of $\Gamma'$ as above.  
Since these groups are not contained in ${\rm SU}(2)$, 
not all $\gamma\in\Gamma$ have eigenvalues of the form $\{e^{i\theta_{\gamma}},e^{-i\theta_{\gamma}}\}$, 
and therefore formula $\eqref{invariant_elements_SU(2)}$ does not hold.  In \cite[Proposition $6.1$]{LVsmorgasbord}, the authors performed an eigenvalue decomposition of these groups, which will be used to find $\dim(H^1_{SD,\Gamma})$, and from which we know that all elements of both groups have eigenvalues of the form $\{e^{i(\theta_1+\theta_2)},e^{i(\theta_1-\theta_2)}\}$.  Therefore, we compute
\begin{align}
\begin{split}
\label{character_sum_III}
\chi_{\rho}(\gamma)
&=e^{i(2m\theta_1)}\Big[\sum_{p=0}^{2m-2}\big(e^{i\theta_2}\big)^{2m-2-2p}+\sum_{p=0}^{2m-4}\big(e^{i\theta_2}\big)^{2m-4-2p}\Big]\\
&=\begin{cases}
\displaystyle e^{i(2m\theta_1)}(4m-4)\phantom{=}&\theta_2=\ell\pi\text{ for }\ell\in\mathbb{Z}\\
\displaystyle e^{i(2m\theta_1)}\big[2\sin((2m-2)\theta_2)\cot(\theta_2)\big]\phantom{=}&\text{$\theta_2\neq\ell\pi$, for $\ell\in\mathbb{Z}$}.
\end{cases}
\end{split}
\end{align}

\noindent
{$ \boldsymbol{\Gamma=\mathfrak{I}^2_{m,n}:}$} 
The eigenvalue decomposition of $\mathfrak{I}^2_{m,n}$ is given by:
\begin{align*}
\begin{array}{l|c}
\hline
\text{Set}&\text{Multiplicity}\\\hline\hline
S_1=\Big\{(-1)^\ell\{e^{\pi i (\frac{\ell}{m}+\frac{k}{n})},e^{\pi i (\frac{\ell}{m}-\frac{k}{n})}\}\Big\}_{0\leq \ell\leq m-1 \text{ and }0\leq k\leq 2n-1}& 1\\
S_2=\Big\{(-1)^\ell\{e^{\pi i (\frac{2\ell+1}{2m}+\frac{1}{2})},e^{\pi i (\frac{2\ell+1}{2m}-\frac{1}{2})}\}\Big\}_{0\leq \ell\leq m-1 \text{ and }0\leq k\leq 2n-1}& 1
\end{array}
\end{align*}
Using \eqref{character_sum_III}, the characters are found to be
\begin{align*}
\chi_{\rho}(\gamma\in S_1)=&\begin{cases}
2\sin\big(\frac{2\pi}{n}k(m-1)\big)\cot\big(\frac{\pi}{n}k\big)\phantom{=}&\text{$k\neq 0$ or $n$}\\
4m-4\phantom{=}&\text{$k=0$ or $n$}.
\end{cases}\\
\chi_{\rho}(\gamma\in S_2)=&0\phantom{=}\text{ since $\theta_2=\pi/2$.}\\
\end{align*}
Then, using \eqref{Eisenstein_identity} to sum the characters, we find that
\begin{align*}
\begin{split}
\dim(H^1_{SD,\mathfrak{I}^2_{m,n}})&=\frac{1}{2mn}\Big[2m\cdot(4m-4)-8mn\Big(\Big(\frac{m-1}{n}\Big)\Big)\Big]\\
&=\begin{cases}
4\lfloor\frac{m-1}{n}\rfloor+2=4b_{\mathfrak{I}^2_{m,n}}-6\phantom{=}&m\not\equiv 1\text{ mod }n\\
\frac{4(m-1)}{n}=4b_{\mathfrak{I}^2_{m,n}}-8\phantom{=}&m\equiv 1\text{ mod }n.
\end{cases}\end{split}
\end{align*}

\noindent
{$ \boldsymbol{\Gamma=\mathfrak{I}^3_m:}$} 
The eigenvalue decomposition of $\mathfrak{I}^3_m$ is given by:
\begin{align*}
\begin{array}{l|c}
\hline
\text{Set}&\text{Multiplicity}\\\hline\hline
S_1=\Big\{\pm e^{\frac{\pi ir}{m}}\{1,1\} \Big\}_{0\leq r<m} & 1\\
S_2=\Big\{\pm e^{\frac{\pi ir}{m}}\{i,-i\} \Big\}_{0\leq r<m}& 3\\
S_3=\Big\{\pm e^{\frac{\pi i(3r+1)}{3m}}\{e^{\frac{\pi i}{3}},e^{\frac{-\pi i}{3}}\}\Big\}_{0\leq r<m}  & 2\\
S_4=\Big\{\pm e^{\frac{\pi i(3r+1)}{3m}}\{e^{\frac{2\pi i}{3}},e^{\frac{-2\pi i}{3}}\}\Big\}_{0\leq r<m} &2\\
S_5=\Big\{\pm e^{\frac{\pi i(3r+2)}{3m}}\{e^{\frac{2\pi i}{3}},e^{\frac{-2\pi i}{3}}\}\Big\}_{0\leq r<m} &4
\end{array}
\end{align*}
Using \eqref{character_sum_III}, the characters are found to be
\begin{align}
\begin{split}
\label{index_3_characters}
\chi_{\rho}(\gamma\in S_1)=&4m-4\\
\chi_{\rho}(\gamma\in S_2)=&0\\
\chi_{\rho}(\gamma\in S_3)=&2e^{2\pi i/3}\sin\Big(2(m-1)\frac{\pi}{3}\Big)\cot\Big(\frac{\pi}{3}\Big)=-e^{2\pi i/3}\\
\chi_{\rho}(\gamma\in S_4)=&2e^{2\pi i/3}\sin\Big(2(m-1)\frac{2\pi}{3}\Big)\cot\Big(\frac{2\pi}{3}\Big)=-e^{2\pi i/3}\\
\chi_{\rho}(\gamma\in S_5)=&2e^{4\pi i/3}\sin\Big(2(m-1)\frac{2\pi}{3}\Big)\cot\Big(\frac{2\pi}{3}\Big)=-e^{4\pi i/3}.
\end{split}
\end{align}
The evaluation of the last three characters follows from $m\equiv 0\text{ mod }3$.
Since all eigenvalues contained in the same set have equal characters, by multiplying each character found in \eqref{index_3_characters} by $2m$, which is the size of each set, and the multiplicity of the respect set, and summing, we find that
\begin{align*}
\dim(H^1_{SD,\mathfrak{I}^3_m})&=\frac{1}{12m}\Big[2m\cdot(4m-4)-8m(e^{2\pi i/3}+e^{4\pi i/3})\Big]=\frac{2}{3}m\\
&=4b_{\mathfrak{I}^3_m}-6.
\end{align*}

\end{proof}

Now, we complete the proof of Theorem \ref{index_theorem}, by finding $N(\Gamma)$, the non-topological correction term for the index, for all finite non-cyclic $\Gamma\subset{\rm U}(2)$.
The cohomology groups of the self-dual deformation complex on the quotient $(\CP^2_{(1,1,2m)},g_{BK})/\Gamma$ correspond to the invariant elements of the respective cohomology groups on the cover, which we denote by $H^i_{SD, \Gamma}$ for $i=1,2,3$.
Given $\dim(H^1_{SD, \Gamma})$, found in Proposition \ref{invariant_proposition}, it is only necessary to find $\dim(H^0_{SD,\Gamma})$ and $\dim(H^2_{SD,\Gamma})$ to recover the index.

The second cohomology group $H^2_{SD}$ of the self-dual deformation complex 
for the Bochner-K\"ahler metric on the
weighted projective space $(\CP^2_{(1,1,2m)},g_{BK})$ vanishes by \cite[Theorem 4.2]{LeBrunMaskit}, and therefore in the quotient it vanishes as well.  The cohomology group $H^0_{SD,\Gamma}$ is given by the elements of $H^0_{SD}$ on the cover that commute with the respective $\Gamma$.
The $S^1$ given by $\phi(e^{i\theta},1)$ is contained in the centralizer of all $\Gamma$, and it is easy to check that, for all non-cyclic $\Gamma$, these are the only elements of $H^0_{SD}$ which commute with the respective $\Gamma$, so $\dim(H^0_{SD,\Gamma})=1$.
Therefore, the index is 
 \begin{align}
 \begin{split}
 \label{index_1}
Ind((\CP^2_{(1,1,2m)},g_{BK})/\Gamma)=1-\dim(H^1_{SD,\Gamma}).
\end{split}
\end{align}
 
The quotient $(\CP^2_{(1,1,2m)},g_{BK})/\Gamma$ has four singularities with orbifold groups $\Gamma$ and $L(q_i,p_i)$, for $i=1,2,3$, as specified above in Theorem \ref{quotient_theorem}.  Thus, from \eqref{index_quantities}, the index is also given by
\begin{align}
\label{index_2}
Ind((\CP^2_{(1,1,2m)},g_{BK})/\Gamma)=\frac{1}{2}(15\chi_{top}-29\tau_{top})+\sum_{i=1}^3N(L(q_i,p_i))+N(\Gamma).
\end{align}

Equating \eqref{index_1} and \eqref{index_2}, since 
$\chi_{top}=3$ and $\tau_{top}=1$, we find that
\begin{align}
\label{correction_formula}
N(\Gamma)=-7-\dim(H^1_{SD,\Gamma})-\sum_{i=1}^3N(L(q_i,p_i)).
\end{align} 
For each $\Gamma$, insert the corresponding $\dim(H^1_{SD,\Gamma})$ and cyclic orbifold groups, as were found in Proposition \ref{invariant_proposition} and Theorem \ref{quotient_theorem} respectively.  Finally, the proof is completed, for $\Gamma\subset{\rm U}(2)$, by examining the conditions on the $\Gamma$ and the respective possible congruences of $m$, and using these appropriately along with the known cyclic correction terms \eqref{N(L(q,p))} to see that
\begin{align}
N(\Gamma)=-4b_{\Gamma}+\mathcal{B}_{\Gamma},
\end{align}
where the $\mathcal{B}_{\Gamma}$ are constants given in Table \ref{index_correction}.  

We compute $N(\phi(L(1,2m)\times T^*))$ as an example.  From Theorem \ref{quotient_theorem} we 
know that the singularities are of type $L(1,2),L(m,3),L(m,3)$, so using \eqref{N(L(q,p))} to find the correction terms for these singularities along with $H^1_{SD,\Gamma}$ as determined in
Proposition~\ref{invariant_proposition}, observe that
\begin{align*}
N(\phi(L(1,2m)\times T^*))=&-7-N(L(1,2))-2N(L(m,3))\\
&-4b_{\phi(L(1,2m)\times T^*)}+\begin{cases}
8\phantom{=}&m\equiv 1\text{ mod } 6\\
6\phantom{=}&m\equiv 5\text{ mod } 6
\end{cases}\\
=&-4b_{\phi(L(1,2m)\times T^*)}+ \begin{cases}
21\phantom{=}&m\equiv 1\text{ mod } 6\\
5\phantom{=}&m\equiv 5\text{ mod } 6.
\end{cases}
\end{align*}
The idea for the other cases is identical and the computations, which follow similarly, are omitted.

\begin{table}[ht]
\caption{Self-dual index correction terms for $\Gamma\subset{\rm U}(2)$}
\begin{center}
\label{index_correction}
\renewcommand\arraystretch{1.4}
\begin{tabular}{|l|c|l|}
\hline
$\Gamma\subset \rm{U}(2)$& $\mathcal{B}_{\Gamma}$ & Congruences  \\\hline\hline

\multirow{2}{*}{$\phi(L(1,2m)\times D^*_{4n})$} & $7-N(L(m,n))$&$m\not\equiv 1$ mod $n$ \\
& $5+4n$ &$m\equiv 1$ mod $n$      \\\hline

\multirow{2}{*}{$\phi(L(1,2m)\times T^*)$}      & $5$    &  $m\equiv 5$ mod $6$       \\
							& $21$& $m\equiv 1$ mod $6$		\\\hline							
\multirow{4}{*}{$\phi(L(1,2m)\times O^*)$} & $1$ & $m\equiv 11$ mod $12$ \\
					 & $9$&  $m\equiv 7$ mod $12$\\
					& $17$& $m\equiv 5$ mod $12$  \\
					& $25$& $m\equiv 1$ mod $12$ \\\hline			
\multirow{6}{*}{$\phi(L(1,2m)\times I^*)$}& $-3$ & $m\equiv 29$ mod $30$ \\
					& $5$& $m\equiv 19$ mod $30$ \\	
					& $9$& $m\equiv 17,23$ mod $30$\\
					& $17$& $m\equiv 7,13$  mod $30$ \\
					&  $21$& $m\equiv 11$ mod $30$	\\
					& $29$& $m\equiv 1$ mod $30$ \\\hline
\multirow{2}{*}{$\mathfrak{I}^2_{m,n}$ diagonal} & $7-N(L(m,n))$ &$m\not\equiv 1$ mod $n$ \\
	& $5+4n$& $m\equiv 1$ mod $n$        \\\hline
$\mathfrak{I}^3_m$ diagonal & $13$& $m\equiv 3$ mod $6$	\\\hline	
\end{tabular}
\end{center}
\end{table}

\begin{remark}
{\em Although the correction terms for $\phi(L(1,2m)\times D^*_{4n})$ and $\mathfrak{I}^2_{m,n}$ contain an $N(L(m,n))$, they are computed algorithmically via \eqref{N(L(q,p))}. 
}
\begin{remark}
\label{SD_index_m=1}
{\em
The second author found the correction terms for the binary polyhedral groups \cite{ViaclovskyIndex}.  These are recovered here as well in the $m=1$ case.}
\end{remark}

\end{remark}

\subsection{Proof of Theorem \ref{more_ASD_deformations}}
\label{pf_masdd}
Let $(X,g)$ be a scalar-flat K\"ahler ALE metric on the minimal resolution of $\CC^2/\Gamma$ for some finite subgroup $\Gamma\subset{\rm U}(2)$ which acts freely on $S^3$.
In \cite{LVsmorgasbord}, the authors showed that the dimension of infinitesimal scalar-flat K\"ahler deformations is at most
\begin{align}
d_{\Gamma, max} = 2  \Big( \sum_{i = 1}^{k_{\Gamma}} (e_i -1) \Big)  + k_{\Gamma} - 3,
\end{align}
where $-e_i$ is the self-intersection number of the $k$th rational curve;  hence, to prove
Theorem~\ref{more_ASD_deformations}, we will consider the self-dual conformal compactification $(\widehat{X},\widehat{g})$, and show that dimension of the moduli space of self-dual conformal structures near $\widehat{g}$ is greater than or equal to $d_{\Gamma, max}$, with equality if and only if $\Gamma\subset{\rm SU}(2)$.  We separate
the proof into two parts -- for $\Gamma$ non-cyclic and for $\Gamma$ cyclic.  The underlying idea is the same for each case, but due to the differences of the respective $N(\Gamma)$ it is necessary to consider them separately.

Let $\Gamma$ be non-cyclic.  To find a convenient presentation of $d_{\Gamma, max}$, consider the 
description of the minimal resolution of $\CC^2/\Gamma$ from Section \ref{minimal_resolutions}.  Recall there is the central rational curve with self-intersection $-b_{\Gamma}$ off of which three Hirzebruch-Jung strings corresponding to the singularities $L(p_1-q_i,p_i)$, for $i=1,2,3$, emanate.
Letting $E_{(p_i-q_i,p_i)}$ denote the sum $\sum_{j=1}^{k_{(p_i-q_i,p_i)}}e^{(p_i-q_i,p_i)}_j$, where the $e^{(p_i-q_i,p_i)}_j$ are the coefficients appearing in the modified Euclidean algorithm for the pair $(p_i-q_i,p_i)$, we see that
 \begin{align}
 d_{\Gamma, max}=\sum_{i=1}^3(2E_{(p_i-q_i,p_i)}-k_{(p_i-q_i,p_i)})+2b_{\Gamma}-4.
 \end{align}
Now, we examine the index of the self-dual deformation complex on $(\widehat{X},\widehat{g})$.
Recall that $\tau_{top}(\widehat{X})=1+\sum_{i=1}^3k_{(p_i-q_i,p_i)}$ and $\chi_{top}(\widehat{X})=\tau_{top}(\widehat{X})+2$.  Therefore, since  $H^2_{SD}(\widehat{X}) $ of the self-dual deformation complex vanishes \cite[Theorem 4.2]{LeBrunMaskit}, the index is given by
\begin{align}
\label{NGamma_index}
Ind(\widehat{X},\widehat{g})=\dim(H^0_{SD})-\dim(H^1_{SD})=-7\sum_{i=1}^3k_{(p_i-q_i,p_i)}+8+N(\Gamma).
\end{align}
Using \eqref{correction_formula} for $N(\Gamma)$, Proposition \ref{invariant_proposition} for $H^1_{SD,\Gamma}$, and since $\dim(H^0_{SD})=1$, observe that
\begin{align}
\label{H1_+}
\dim(H^1_{SD})=7\sum_{i=1}^3k_{(p_i-q_i,p_i)}+4b_{\Gamma}-\mathcal{C}_{\Gamma}+\sum_{i=1}^3N(L(q_i,p_i)),
\end{align}
where the $\mathcal{C}_{\Gamma}$ are constants specified in Proposition \ref{invariant_proposition}.
Although $N(\Gamma)$ is given explicitly in Table \ref{index_correction}, it is more
useful to consider \eqref{correction_formula} here.
We would like to understand each $N(L(q_i,p_i))$ in terms of $N(L(p_i-q_i,p_i))$ as to be able to compare \eqref{H1_+} with $d_{\Gamma, max}$.  
In \cite{LockViaclovsky}, the authors showed that, for $1<q_i<p_i-1$,
\begin{align}
N(L(q_i,p_i))=-N(L(p_i-q_i,p_i))-12=4E_{(p_i-q_i,p_i)}-12k_{(p_i-q_i,p_i)}-2,
\end{align}
and from \eqref{index_quantities} it is clear that $N(L(1,p_i))=-N(L(-1,p_i))-10$ for $p_i~>~2$.  Therefore, we find that
\begin{align}
\dim(H^1_{SD})=\sum_{i=1}^3\big(4E_{(p_i-q_i,p_i)}-5k_{(p_i-q_i,p_i)}\big)+4b_{\Gamma}-\mathcal{C}_{\Gamma}-2\kappa,
\end{align}
where $\kappa$ is the number of singularities not of type $L(\pm 1,p_i)$; note that $k\leq 1$ with equality only in the case that $\Gamma=\phi(L(1,2m)\times I^*)$ and $m\equiv \pm 2\text{ mod }5$.  Using \eqref{b_Gamma}, observe that
\begin{align}
\begin{split}
\label{H-d}
\dim(H^1_{SD})-d_{\Gamma, max}&=\sum_{i=1}^3\big(2E_{(p_i-q_i,p_i)}-4k_{(p_i-q_i,p_i)}\big)\\
&\ \ \ +\frac{8m}{|\Gamma|}\Big[m-\Big(m\text{ mod }\frac{|\Gamma|}{4m}\Big)\Big]-\mathcal{C}_{\Gamma}-2\kappa+8.
\end{split}
\end{align}
For $\Gamma\subset {\rm SU}(2)$, it is clear from \eqref{H-d} that $\dim(H^1_{SD})=d_{\Gamma, max}$.  These are the hyperk\"ahler metrics, see \cite[Section $8$]{LVsmorgasbord}.  For $\Gamma \not\subset  {\rm SU}(2)$, each $E_{(p_i-q_i,p_i)}>2k_{(p_i-q_i,p_i)}$ since at least one of the $e^{(p_i-q_i,p_i)}_j>2$.  Also, $\mathcal{C}_{\Gamma}\leq 8$, and in particular $\mathcal{C}_{\Gamma}\leq 6$ for the $\Gamma$ where $\kappa=1$.  Therefore, we see that
\begin{align}
\dim(H^1_{SD})-d_{\Gamma, max}\geq\sum_{i=1}^3\big(2E_{(p_i-q_i,p_i)}-4k_{(p_i-q_i,p_i)}\big)>0.
\end{align}

Now, let $\Gamma=L(q,p)$, for some relatively prime integers $1\leq q<p$, so
\begin{align}
d_{\Gamma, max}=\sum_{j=1}^{k_{(q,p)}}2e_j-k_{(q,p)}-3,
\end{align}
where the $e_j$ are as in the modified Euclidean algorithm for the pair $(q,p)$.  Now, we examine the index of the self-dual deformation complex on $(\widehat{X},\widehat{g})$.  Recall that $\tau_{top}(\widehat{X})=k_{(q,p)}$ and $\chi_{top}(\widehat{X})=k_{(q,p)}+2$.  Once again, from \cite[Theorem 4.2]{LeBrunMaskit}, we see that $H^2_{SD}$ vanishes.  Therefore
\begin{align}
Ind(\widehat{X},\widehat{g})=\dim(H^0_{SD})-\dim(H^1_{SD})=-7k_{(q,p)}+15+N(L(q,p)),
\end{align}
and using \eqref{N(L(q,p))} we find that
\begin{align}
\dim(H^1_{SD})=\dim(H^0_{SD})+\sum_{j=1}^{k_{(q,p)}}4e_j-5k_{(q,p)}-5-2\kappa,
\end{align}
where $\kappa=1$ if $q=1$ and $\kappa=0$ otherwise.  Observe that
\begin{align}
\dim(H^1_{SD})-d_{\Gamma, max}=\dim(H^0_{SD})+\sum_{j=1}^{k_{(q,p)}}2e_j-4k_{(q,p)}-2-2\kappa.
\end{align}
For $\Gamma=L(-1,p)\subset{\rm SU}(2)$, these are the hyperk\"ahler mutli-Eguchi-Hanson metrics, and it is well known that the moduli space is of dimension $3(p-2)=d_{\Gamma, max}$.  For $\Gamma\not\subset {\rm SU}(2)$, at least one $e_j>2$ and thus $\dim(H^1_{SD})-d_{\Gamma, max}>0$ which completes the proof.


\section{Self-dual constructions}
\label{new_spaces}
It is well known that self-dual orbifolds with complementary singularities, and both with vanishing $H^2_{SD}$ of the self-dual deformation complex, can be glued together to obtain self-dual metrics on the connected sum, see \cite{DonaldsonFriedman,Floer,LeBrunSinger,KovalevSinger,AcheViaclovsky2}.  The following theorem summarizes the results of these works.
\begin{theorem}
\label{SD_gluing}
Let $(M_1,[g_1])$ and $(M_2,[g_2])$ be compact self-dual orbifolds which have complementary singularities, i.e. the respective orbifold groups are orientation reversed conjugate to each other.  If the second cohomology group of the self-dual deformation complex on each orbifold vanishes, then the connect sum $M_1\#M_2$, taken at the complementary singularities, admits self-dual metrics.
\end{theorem}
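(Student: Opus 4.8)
The plan is to prove this via the twistor-space gluing construction of Donaldson--Friedman \cite{DonaldsonFriedman}, as adapted to the orbifold setting in \cite{KovalevSinger, AcheViaclovsky2} (see also \cite{Floer, LeBrunSinger}). The starting point is the Penrose twistor correspondence for self-dual four-orbifolds: to each $(M_i,[g_i])$ one associates its twistor space $Z_i$, a complex three-dimensional orbifold fibered over $M_i$ with fiber $\CP^1$ and equipped with a real structure, in such a way that infinitesimal self-dual deformations of $[g_i]$ are computed by $H^1(Z_i,\Theta_{Z_i})$ and their obstructions by $H^2(Z_i,\Theta_{Z_i})$. Under this correspondence the hypothesis that the second cohomology of the self-dual deformation complex vanishes translates precisely into $H^2(Z_i,\Theta_{Z_i}) = 0$. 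Near a complementary orbifold point $p_i \in M_i$ with orbifold group $\Gamma_i$, the twistor line $L_i \cong \CP^1$ sits in $Z_i$ with normal bundle $\mathcal{O}(1)\oplus\mathcal{O}(1)$ and a neighborhood biholomorphic to the corresponding piece of the twistor space of the flat cone $\RR^4/\Gamma_i$, with $\Gamma_i$ acting holomorphically.

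Next I would glue the twistor spaces. Blowing up $Z_i$ along $L_i$ produces $\widehat{Z}_i$ whose exceptional divisor is a smooth quadric $Q_i \cong \CP^1\times\CP^1$ carrying the induced actions of $\Gamma_i$ and of the real structure. The essential point is that an orientation-reversing intertwining isomorphism $\Gamma_1 \cong \overline{\Gamma_2}$ induces a biholomorphism $Q_1 \cong Q_2$ which interchanges the two rulings and is compatible with the real structures --- this is exactly what "complementary singularities" provides. Gluing $\widehat{Z}_1$ to $\widehat{Z}_2$ along this identification yields a compact complex space $Z_0 = \widehat{Z}_1 \cup_Q \widehat{Z}_2$ with a single normal-crossing divisor $Q$; since $Q_i$ is anticanonical in $\widehat{Z}_i$, one has $N_{Q/\widehat{Z}_1} \otimes N_{Q/\widehat{Z}_2} \cong \mathcal{O}_Q$, i.e. $Z_0$ is $d$-semistable.

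I would then smooth $Z_0$. The obstruction to constructing a one-parameter flat deformation $\{Z_t\}$ with $Z_t$ smooth for $t \neq 0$ lies in a group bounded, via a Mayer--Vietoris-type exact sequence, by $H^2(\widehat{Z}_1, \Theta_{\widehat{Z}_1}(-Q_1)) \oplus H^2(\widehat{Z}_2, \Theta_{\widehat{Z}_2}(-Q_2))$, and each summand is isomorphic to $H^2(Z_i,\Theta_{Z_i})$, which vanishes by hypothesis; hence a smoothing exists. One checks that the real structure and the real twistor family deform together with $Z_t$, so for small $t \neq 0$ the space $Z_t$ is the twistor space of a self-dual conformal class on a smooth four-orbifold, and tracking the geometry near $Q$ identifies that orbifold with the connect sum $M_1 \# M_2$ formed at the complementary points: the matching of $\Gamma_i$ with $\overline{\Gamma_i}$ under the ruling-swap ensures the neck region is genuinely smooth (no new orbifold point is created), while the remaining singularities of $M_1$ and $M_2$ persist unchanged.

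The step I expect to be the main obstacle is the bookkeeping at the orbifold locus: one must verify that the Penrose transform identifies the analytic obstruction space with $H^2(Z_i,\Theta_{Z_i})$ in the orbifold category, that $Z_0$ is $d$-semistable even at the orbifold points lying on $Q$, and that the whole construction can be carried out $\Gamma$-equivariantly so that $Z_t$ acquires no spurious singularities along the glued quadric. Since the theorem merely assembles results already available in \cite{DonaldsonFriedman, Floer, LeBrunSinger, KovalevSinger, AcheViaclovsky2}, once these orbifold-theoretic points are recorded the conclusion follows by quoting the general smoothing argument together with the reverse twistor correspondence, the observation being that the vanishing of $H^2$ of the self-dual deformation complex supplies exactly the unobstructedness these theorems require.
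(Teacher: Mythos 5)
Your proposal takes essentially the same route as the paper: the paper gives no proof of this theorem, stating only that it summarizes the results of \cite{DonaldsonFriedman,Floer,LeBrunSinger,KovalevSinger,AcheViaclovsky2}, and your sketch (blow up the twistor lines, glue the exceptional quadrics with rulings swapped using the orientation-reversed conjugacy of the orbifold groups, smooth the resulting normal-crossing space using the vanishing of $H^2(Z_i,\Theta_{Z_i})$, then invoke the reverse twistor correspondence) is precisely the Donaldson--Friedman argument as adapted to the orbifold setting in those references. The only small quibble is that $d$-semistability is standardly justified by computing the normal bundles of the exceptional quadrics to be of bidegree $(1,-1)$ and $(-1,1)$ after the ruling swap, rather than by an anticanonical argument, but this does not change the outline.
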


Recall the list of the finite subgroups of ${\rm SO}(4)$ which act freely on $S^3$, preserve the Hopf fibration and which are not contained in ${\rm U}(2)$ that was given in Table \ref{orientation_reversed_groups}.
Since these groups do not lie in ${\rm U}(2)$, a scalar-flat K\"ahler ALE space with such a group at infinity cannot exist.
Thus, as posed by the second author in \cite{ViaclovskyIndex}, the natural question is that of the existence of scalar-flat anti-self-dual ALE metrics with these groups at infinity. Theorem~\ref{tor} answers this question in the affirmative for these groups, which is proved below in Section \ref{proof_reversedALE_thm}.  Subsequently, Corollary \ref{tor_corollary} is proved in Section~\ref{sequence_corollary_proof}.

\subsection{Proof of Theorem \ref{tor}}
\label{proof_reversedALE_thm}
Let $\overline{\Gamma_1}=\overline{\phi(L(1,2m)\times D^*_{4n})}$ and $\overline{\Gamma_2}=\overline{\mathfrak{I}^2_{m,n}}$.  Since both preserve the Hopf fiber structure,
they act isometrically on the LeBrun negative mass metrics.  First, for each of these groups, we will take the quotient of some appropriate negative mass metric to obtain an orbifold ALE space with the action of the entire group at infinity.  The idea here is analogous to that of \cite[Theorem 4.1]{LVsmorgasbord}. 
Notice that both $\overline{\Gamma_1}$ and $\overline{\Gamma_2}$ contain the generator $\phi(e^{\pi i/n},1)$, and as such, respectively for each we take the quotient of $(\mathcal{O}(-2n),g_{LB})$ by the subgroup 
\begin{align}
\overline{\Gamma_i}'=\begin{cases}
\langle \phi(1,e^{\pi i/m}), \phi(\hat{j},1)\rangle\phantom{=}&i=1\\
\langle \phi(\hat{j},e^{\pi i/(2m)})\rangle\phantom{=}&i=2.
\end{cases}
\end{align}
Here, the action is induced by the usual action on $\CC^2$ and, in particular, on the $\CP^1$ at the origin it descends via the Hopf map.
Since $\overline{\Gamma_i}'\not\subset{\rm U}(2)$ the K\"ahlerian property is not preserved in the quotient,
however, the anti-self-dual property is preserved.

Since $\hat{j}*(z_1+z_2\hat{j})=\bar{z}_1\hat{j}-\bar{z}_2$, observe that under the Hopf map $\phi(\hat{j},1)$ acts as
\begin{align}
\mathcal{H}\big(\phi(\hat{j},1)(z_1,z_2)\big)= -\frac{\bar{z}_2}{\bar{z}_1}\in S^2=\CC\cup\{\infty\},
\end{align}
which is the antipodal map.  Also, notice $\phi(\hat{j},e^{\pi i/(2m)})^2=\phi(-1,e^{\pi i/m})$ fixes points on the $\CP^1$ at the origin that are identified by the antipodal map (the points $\{0\},\{\infty\}\in S^2=\CC\cup\{\infty\}$).
Therefore, similar to the work in \cite[Theorem 4.1]{LVsmorgasbord}, 
we find that the quotients $(\mathcal{O}(-2n),g_{LB})/\overline{\Gamma_i}'$, for $i=1,2$, are anti-self dual ALE orbifolds with  groups at infinity the respective $\overline{\Gamma_i}$, and each having one singularity with orbifold group $L(-n,m)$ on the $\RP^2$ resulting from the quotient of the $\CP^1$ at the origin by the antipodal map.  Clearly,  this space has $\pi_1(\mathcal{O}(-2n)/\overline{\Gamma_i}')=\mathbb{Z}/2\mathbb{Z}$ and $\tau_{top}(\mathcal{O}(-2n)/\overline{\Gamma_i}')=0$.  Notice that when $m=1$, which can only occur for $\Gamma_2$, these are in fact smooth quotients and the proof is complete.  Therefore, from here on we can assume $m> 1$.

We note that $O(-2n) /\overline{\Gamma_i}'$ is diffeomorphic to a non-orientable bundle over $\RP^2$. These are classified by $H^2( \RP^2, \mathbb{Z}_{w})$, where $Z_w$ is the system of local coefficients determined by the first Stiefel-Whitney class $w$ of the bundle. It turns out that $H^2( \RP^2, \mathbb{Z}_w)$ is isomorphic to $Z$, and under a suitable choice of isomorphism, this bundle is mapped to the integer $-n$, see \cite{Baraglia}.

The remainder of the proof will follow from the self-dual orbifold gluing of Theorem~\ref{SD_gluing}.  
Consider the self-dual conformal compactification $(\widehat{\mathcal{O}}(-2n),\widehat{g}_{LB})/\overline{\Gamma_i}'$ which has two singularities -- one with orbifold group $L(n,m)$ on the $\RP^2$ and the other with orbifold group $\overline{\Gamma_i}$ at the point of compactification, for $i=1,2$.  Let
$(\widehat{C}_{L(-n,m)},\widehat{h})$ denote the compactification of a Calderbank-Singer metric with group $L(-n,m)$ at infinity.
The second cohomology group $H^2_{SD}$ of the self-dual deformation complex of both orbifolds vanishes here by \cite[Theorem 4.2]{LeBrunMaskit} respectively.  Therefore, we can apply
Theorem \ref{SD_gluing} to obtain a self-dual orbifold conformal structure $[\widehat{g}_i]$ with positive Yamabe invariant on
\begin{align}
\widehat{X}_i=\widehat{\mathcal{O}}(-2n)/\overline{\Gamma_i}'\phantom{i}\#\phantom{i} \widehat{C}_{L(-n,m)},
\end{align}
where the connect sum is taken at the $L(n,m)$ orbifold point and the point of compactification respectively, for $i=1,2$.  The orbifold $(\widehat{X}_i,[\widehat{g}_i])$ has one singularity with orbifold group $\overline{\Gamma_i}$.  Thus, for $i=1,2$, taking the conformal blow-up at this point (since the Yamabe invariant is positive), we obtain a scalar-flat anti-self-dual ALE space $(X_i,g_{X_i})$ with group at infinity $\overline{\Gamma_i}$, satisfying $\pi_1(X_i)=\mathbb{Z}/2\mathbb{Z}$ and $\tau_{top}(X_i)=-k_{(m-n,m)}$.
We note that it follows from the gluing theorem that the second cohomology group of the self-dual deformation complex $H^2_{SD}(\hat{X}_i)$ also vanishes for these spaces. 

Finally, an equivariant version of the gluing theorem can in fact be used to ensure that the spaces $(X_i,g_{X_i})$ admit an isometric $S^1$-action.

\subsection{Proof of Corollary \ref{tor_corollary}}
\label{sequence_corollary_proof}
Let $\Gamma_1=\phi(L(1,2m)\times D^*_{4n})$ and $\Gamma_2=\mathfrak{I}^2_{m,n}$ with $m,n$ as specified in Table \ref{groups} so the action on $S^3$ is free.  For $i=1,2$, let $(Y_i,g_{Y_i})$ denote the scalar-flat K\"ahler, hence anti-self-dual, ALE space with group at infinity $\Gamma_i$,
obtained for the non-cyclic ($n>1$) and cyclic ($n=1$) cases respectively in \cite{LVsmorgasbord,CalderbankSinger}, and let $(X_i,g_{X_i})$ be the scalar-flat anti-self-dual ALE spaces with the orientation reversed groups at infinity $\overline{\Gamma_1}$ and $\overline{\Gamma_2}$ of
Theorem~\ref{tor}.
Taking the conformal compactifications $(\widehat{Y}_i,\widehat{g}_{Y_i})$ and $(\widehat{X}_i,\widehat{g}_{X_i})$ with the self-dual orientation, as in Remark~\ref{compactification_groups}, we obtain self-dual orbifolds with orbifold groups $\Gamma_i$ and $\overline{\Gamma_i}$ respectively.   Since these are orientation reversed conjugate, and because $H^2_{SD}$ vanishes for each orbifold as pointed out above, we are once again able to use the self-dual orbifold gluing of Theorem~\ref{SD_gluing} to obtain a self-dual conformal structure on the orbifold connect sum $\widehat{X}_i\#\widehat{Y}_i$, where the connect sum is taken at the points of compactification.  

Clearly the signature of this space, $\tau(\widehat{X}_i\#\widehat{Y}_i)=-\tau(X_i)-\tau(Y_i)$,
is dependent upon $m$ and $n$, and to highlight this, we denote it by $\ell_i(m,n)$, for $i=1,2$.  For any $n$, $\tau(X_i)$ is easy to find from the work of Section \ref{proof_reversedALE_thm}, and in the non-cyclic case ($n>1$), $\tau(Y_i)$ is given in \cite{LVsmorgasbord}.  In the cyclic case ($n=1$) however, one finds that both $\Gamma_1$ and $\Gamma_2$ are orientation preserving conjugate to $L(2m+1, 4m)$ for their respective $m$, and hence that $\tau(Y_i)=-k_{(2m+1, 4m)}=-3$.  Therefore, we find that
\begin{align}
\label{tau_connect_sum2}
\ell_i(m,n):=\tau(\widehat{X}_i\#\widehat{Y}_i)=3+\begin{cases}
k_{(n-m,n)}+k_{(m-n,m)}\phantom{=}&\text{$n>1$ and $m>1$}\\
m-1\phantom{=}&\text{$n=1$ and $m>1$}\\
n-1\phantom{=}&\text{$n>1$ and $m=1$}\\
0\phantom{=}&\text{$n=1$ and $m=1$}.
\end{cases}
\end{align}
Note that since $m$ must be odd for $\Gamma_2$, the last two cases in \eqref{tau_connect_sum2} cannot occur for this group.  Also, we distinguish
the $m=n=1$ case for $\Gamma_1$ as $(\ell_1(1,1)=3)\#\CP^2$ is the minimal number of $\CP^2s$ on which a self-dual metric is obtained by this technique.

Now, we will show that $\widehat{X}_i\#\widehat{Y}_i$ is, in fact, simply connected.  Cover $\widehat{X}_i\#\widehat{Y}_i$ with open sets $U_{\widehat{X}_i}$ and $U_{\widehat{Y}_i}$ containing the $\widehat{X}_i$ and $\widehat{Y}_i$ components of the connect sum respectively, and so that $U_{\widehat{X}_i}\cap U_{\widehat{Y}_i}$ deformation retracts to $S^3/\Gamma_i$.  Recall that we have the homomorphisms of fundamental groups $i_{\widehat{X}_i}:\pi_1(U_{\widehat{X}_i}\cap U_{\widehat{Y}_i})\rightarrow \pi_1(U_{\widehat{X}_i})$ and $i_{\widehat{Y}_i}:\pi_1(U_{\widehat{X}_i}\cap U_{\widehat{Y}_i})\rightarrow \pi_1(U_{\widehat{Y}_i})$ induced from the respective inclusion maps.  
Observe that $i_{\widehat{X}_i}$ is surjective since $\pi_1(U_{\widehat{X}_i})=\mathbb{Z}/2\mathbb{Z}$ results from the antipodal map on $S^2$ induced by the action of a generator of $\overline{\Gamma_i}$ under the Hopf map.  Also, the map $i_{\widehat{Y}_i}$ is clearly trivial since $\pi_1(U_{\widehat{Y}_i})=\{1\}$.
By the Seifert-van Kampen theorem
\begin{align}
\pi_1(\widehat{X}_i\#\widehat{Y}_i)=\pi_1(U_{\widehat{X}_i})*\pi_1(U_{\widehat{Y}_i})/N,
\end{align}
where $*$ denotes the free product and $N$ is the normal subgroup of $\pi_1(U_{\widehat{X}_i})*\pi_1(U_{\widehat{Y}_i})$ generated by $i_X(\gamma)i_Y(\gamma)^{-1}$ for all $\gamma\in \pi_1(U_{\widehat{X}_i}\cap U_{\widehat{Y}_i})$.  Therefore, given that $i_{\widehat{X}_i}$ is surjective, $i_{\widehat{Y}_i}$ is trivial, and $\pi_1(U_{\widehat{Y}_i})=\{1\}$, as discussed above, we find that $\pi_1(\widehat{X}_i\#\widehat{Y}_i)=~\{1\}$.
By the results of Donaldson-Freedman, 
$\widehat{X}_i\#\widehat{Y}_i$ is homeomorphic to $\ell_i(m,n)\#\CP^2$ (see for example \cite{FU}).  
Finally, by taking sequences of conformal factors which uniformly scale one of the factors to have zero volume in the limit, the orbifold limiting statement follows immediately from this construction.  

Again, an equivariant version of the gluing theorem can in fact be used to ensure that these examples admit a conformally isometric $S^1$-action.

\begin{remark}{\em
The Donaldson-Freedman result only provides a homeomorphism, but
we suspect that these manifolds are in fact {\textit{diffeomorphic}} to $\ell\#\CP^2$.
}
\end{remark}

\bibliography{SD_Quotients_references}

\end{document}